\documentclass[12pt,a4paper]{article}
\usepackage{fancyhdr}
\usepackage[british]{babel}

\usepackage{amssymb, amsmath, amsthm, amsfonts, mathtools}
\usepackage[usenames,dvipsnames,svgnames,table]{xcolor}
\usepackage{graphicx,tikz,caption,subcaption}
\usetikzlibrary{patterns, shapes}
\usepackage{hyperref}
\usepackage{enumitem,verbatim}
\usepackage{multicol}
\usepackage{float}
\usepackage{cleveref}
\usepackage[margin=2.5cm]{geometry}
\usepackage{dsfont}
\usepackage{mathtools}
\usepackage{enumitem}
\usepackage{mathrsfs}
\definecolor{gray}{rgb}{0.25, 0.25, 0.25}

\newtheorem{theorem}{Theorem}[section]

\newtheorem{conjecture}[theorem]{Conjecture}

\newtheorem{fact}[theorem]{Fact}

\newtheorem*{theorem*}{Theorem}

\newcommand{\Tr}{\operatorname{Tr}}

\newcommand{\GL}{\mathrm{GL}}

\theoremstyle{definition}

\theoremstyle{plain}

\newtheorem{prop}[theorem]{Proposition}

\theoremstyle{definition}

\hypersetup{
    colorlinks=true,
    linkcolor=black,
    citecolor=black,
    urlcolor=blue
}
\title{Conjugacy Class Averages and Sidorenko's Conjecture}
\author{Yuqi Zhao\thanks{Email: \texttt{yuqi.zhao012@gmail.com}}}
\date{}

\begin{document}

\maketitle
\begin{abstract}
Sidorenko's conjecture asserts that for every bipartite graph $H$ and every graph $G$,
\[
    t(H,G)\geq t(K_2,G)^{e(H)}.
\]
A result of Szegedy shows that, in order to prove the conjecture, it suffices to verify the corresponding inequality on a special family of highly symmetric bipartite Cayley type hosts arising from symmetric groups. Motivated by this reduction, we study Cayley type bipartite kernels associated with functions on finite groups and their conjugacy class averages.

Our first result gives a reduction through conjugacy averaging: for a fixed bipartite graph $H$, if the $H$-density of each Cayley type host is at least the $H$-density of its conjugacy class average, then $H$ is strong Sidorenko, and hence Sidorenko. Our second result proves a Sidorenko-type inequality for 1-subdivision graphs on conjugacy-averaged Cayley kernels associated with arbitrary real-valued functions on finite groups.
\end{abstract}

\section{Introduction}

One of the central problems in extremal graph theory is to determine the minimum possible homomorphism density of a fixed graph $H$ in graphs with a prescribed edge density. A prominent conjecture, proposed independently by Sidorenko~\cite{sidorenko1993correlation,sidorenko1991inequalities} and Erd\H{o}s--Simonovits~\cite{erdos1984cube}, asserts that for every bipartite graph $H$, this minimum is attained, at the level of homomorphism densities, by the random graph with the same edge density.

Throughout the paper, all graphs are finite. For graphs $H$ and $G$, a \emph{homomorphism} from $H$ to $G$ is a map
\[
    \varphi:V(H)\to V(G)
\]
such that $\varphi(u)\varphi(v)\in E(G)$ whenever $uv\in E(H)$. We write $\hom(H,G)$ for the number of homomorphisms from $H$ to $G$. The \emph{homomorphism density} of $H$ in $G$ is
\[
    t(H,G)=\frac{\hom(H,G)}{|V(G)|^{|V(H)|}}.
\]
Sidorenko's conjecture can then be stated as follows.

\begin{conjecture}[Sidorenko's conjecture]\label{conj:sidorenko}
For every bipartite graph $H$ and every graph $G$,
\[
    t(H,G)\geq t(K_2,G)^{e(H)}.
\]
\end{conjecture}

A bipartite graph satisfying this inequality for all host graphs $G$ is called \emph{Sidorenko}. Sidorenko~\cite{sidorenko1993correlation} proved the conjecture for trees, even cycles, and complete bipartite graphs. Subsequent works have verified the conjecture for several further classes of bipartite graphs; for instance, Hatami~\cite{hatami2010graph} proved it for hypercubes, Conlon, Fox, and Sudakov~\cite{conlon2010approximate} proved it for bipartite graphs with one vertex complete to the other side, and Conlon, Kim, Lee, and Lee~\cite{conlon2018some} proved it for strongly tree-decomposable graphs.

We shall also use the bipartite, or asymmetric, version of homomorphism density. Let
\[
    H=(X\sqcup Y,E_H)
    \quad\text{and}\quad
    G=(U\sqcup V,E_G)
\]
be bipartite graphs with specified bipartitions. A \emph{bipartite homomorphism} from $H$ to $G$ is a homomorphism $\varphi:V(H)\to V(G)$ such that
\[
    \varphi(X)\subseteq U,
    \qquad
    \varphi(Y)\subseteq V.
\]
Let $\hom_{\mathrm{bip}}(H,G)$ be the number of bipartite homomorphisms from $H$ to $G$, and define
\[
    t_{\mathrm{bip}}(H,G)
    =
    \frac{\hom_{\mathrm{bip}}(H,G)}
    {|U|^{|X|}|V|^{|Y|}}.
\]
Thus the bipartite edge density of $G$ is
\[
    t_{\mathrm{bip}}(K_2,G)=\frac{|E_G|}{|U||V|}.
\]
We say that $H$ is \emph{strong Sidorenko} if
\[
    t_{\mathrm{bip}}(H,G)
    \geq
    t_{\mathrm{bip}}(K_2,G)^{e(H)}
\]
for every bipartite host graph $G$ with a specified bipartition. Strong Sidorenko implies Sidorenko. The asymmetric and directed formulations of Sidorenko-type inequalities are closely related; see, for example, Fox, Himwich, Mani, and Zhou~\cite{fox2022note}.

One motivation for our work is a reduction of Szegedy~\cite{szegedy2015sparse}, which shows that it suffices to prove Sidorenko-type inequalities on highly symmetric bipartite hosts. We shall use the following Cayley-form version of this reduction.

\begin{prop}[Szegedy's reduction, Cayley form]\label{prop:szegedy}
Let $H=(X\sqcup Y,E_H)$ be a bipartite graph. For $n\geq 1$, let $S_n$ be the symmetric group, and let $T_1,T_2\leq S_n$ be subgroups. Define the bipartite graph
\[
    G_{\mathrm{Cay}}(S_n;T_1,T_2)
    =
    (S_n^{(1)}\sqcup S_n^{(2)},E)
\]
as follows. The two vertex classes are two copies of $S_n$. Identifying each copy with $S_n$, for
$x\in S_n^{(1)}$ and $y\in S_n^{(2)}$ we put
\[
    xy\in E
    \quad\Longleftrightarrow\quad
    x^{-1}y\in T_1T_2,
\]
where
\[
    T_1T_2=\{t_1t_2:t_1\in T_1,\ t_2\in T_2\}.
\]
Equivalently, $G_{\mathrm{Cay}}(S_n;T_1,T_2)$ is the bipartite Cayley graph on two copies of $S_n$ with connection set $T_1T_2$.

Suppose that for every $n$ and every pair of subgroups $T_1,T_2\leq S_n$ we have
\[
    t_{\mathrm{bip}}\bigl(H,G_{\mathrm{Cay}}(S_n;T_1,T_2)\bigr)
    \geq
    t_{\mathrm{bip}}\bigl(K_2,G_{\mathrm{Cay}}(S_n;T_1,T_2)\bigr)^{e(H)}.
\]
Then $H$ is strong Sidorenko. In particular, $H$ is Sidorenko.
\end{prop}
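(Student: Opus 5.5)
The plan is to reduce an arbitrary bipartite host to a symmetric-group Cayley host in two steps: first approximate a general host by one whose two sides are orbits of a single group action, and then lift that action to a Cayley graph on $S_n$ without changing any homomorphism densities. Let $G=(U\sqcup V,E_G)$ be an arbitrary bipartite host. By blowing up vertices we may assume $|U|=|V|=m$; blow-ups do not change $t_{\mathrm{bip}}(H,\cdot)$ or the edge density. Both quantities are continuous in the weighted kernel, so we may pass to rational edge weights, and by further blowing up and replacing weighted edges by suitable $0/1$ patterns (a limiting argument) we reduce to unweighted hosts, up to arbitrarily small error in both densities.

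The key structural step is to realise $G$, or an arbitrarily close approximation, as a ``coset graph''. Take $n$ large and let $S_n$ act on a set $\Omega$ with $|\Omega|=n$. Encode $U$ and $V$ as the orbits of point stabilisers, $U\cong S_n/T_1$ and $V\cong S_n/T_2$. Adjacency $xT_1\sim yT_2$ iff $x^{-1}y\in T_1T_2$ is then a single double coset, so we cannot take an arbitrary graph directly. Following Szegedy, I would instead use the transfer: for a general host $G$, form the bipartite graph whose vertices are bijections and whose edges encode ``consistent'' pairs. Concretely, let $n=|E_G|$, identify $\Omega$ with $E_G$, let $T_1$ be the stabiliser of the partition of $E_G$ by left endpoints (a Young subgroup $\prod_{u\in U}S_{N(u)}$), and let $T_2$ be the analogous Young subgroup for right endpoints. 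The heart of the argument is a density computation. For $\sigma\in S_n$ chosen uniformly, the $H$-density in $G_{\mathrm{Cay}}(S_n;T_1,T_2)$ equals the probability that a random assignment of group elements to $V(H)$ has all quotients $x_i^{-1}y_j$ in $T_1T_2$. Using the Markov/entropy characterisation, one shows that this equals, up to normalisation, the $H$-density in a weighted graph in which each vertex $u$ has weight proportional to its degree. In other words, the Cayley host is the ``degree-normalised'' version of $G$.

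Finally, I would invoke the known fact, also due to Szegedy and valid via the entropy/tensor-power trick, that strong Sidorenko for all hosts follows from the inequality on such normalised hosts. This works by taking tensor powers $G^{\otimes k}$: densities are multiplicative, and after normalising degrees in tensor powers the irregularity cost is $e^{o(k)}$. Taking $k$-th roots and letting $k\to\infty$ then recovers the inequality for $G$. The main obstacle is the middle step, namely verifying that the Cayley double-coset graph reproduces exactly, or asymptotically, the normalised $H$-density of $G$ with the correct edge density. Since the paper cites Szegedy's reduction, I would primarily present this as a restatement: the proof checks that Szegedy's symmetric hosts are precisely the graphs $G_{\mathrm{Cay}}(S_n;T_1,T_2)$. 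It does so by identifying cosets $S_n/T_i$ with vertices, noting that bipartite densities of the coset graph and the Cayley graph on $S_n$ coincide because the quotient map is measure preserving, and then citing~\cite{szegedy2015sparse} for the remaining analytic limit.
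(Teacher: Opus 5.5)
Your closing paragraph is the paper's argument. The paper does not reprove Szegedy's theorem. It only notes that his coset graph on $S_n/T_1\sqcup S_n/T_2$ pulls back to $G_{\mathrm{Cay}}(S_n;T_1,T_2)$ under $x\mapsto xT_1$, $y\mapsto yT_2$. In that coset graph, cosets are adjacent iff $xT_1\cap yT_2\neq\emptyset$, which holds iff $x^{-1}y\in T_1T_2$, a condition that depends only on the cosets. The pullback is a regular blow-up with fibres of sizes $|T_1|$ and $|T_2|$, so every normalised bipartite density, including the edge density, is unchanged. That part is correct and is all that is needed.

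Everything before that should be dropped rather than offered as a sketch of why the reduction holds, because its central step is false. The host $G_{\mathrm{Cay}}(S_n;T_1,T_2)$ is always regular: every vertex has degree $|T_1T_2|$. Hence $t_{\mathrm{bip}}(K_{1,2},\cdot)=t_{\mathrm{bip}}(K_2,\cdot)^2$ on it. So it cannot reproduce the $H$-densities of an irregular $G$, even up to a factor $c^{e(H)}$. The same holds for the degree-weighted graph you mention.

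Take the path with edges $e_1=u_1v_1$, $e_2=u_1v_2$, $e_3=u_2v_2$. Your Young subgroups are $T_1=\langle(e_1\,e_2)\rangle$ and $T_2=\langle(e_2\,e_3)\rangle$, with $|T_1T_2|=4$. The resulting host is a blow-up of $C_6$ with edge density $2/3$ rather than $3/4$. Its ratio $t_{\mathrm{bip}}(K_{1,2},\cdot)/t_{\mathrm{bip}}(K_2,\cdot)^2$ equals $1$. The same ratio is $10/9$ for $G$ and $33/32$ for the degree-weighted $G$ (vertex weights $2/3,1/3$ on $u_1,u_2$ and $1/3,2/3$ on $v_1,v_2$).

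Getting from arbitrary hosts to these symmetric, biregular ones is exactly the nontrivial content of Szegedy's theorem, which he proves through sparse graph limits and entropy maximisation. It is not supplied by this encoding together with the unquantified claim that the ``irregularity cost'' of tensor powers is $e^{o(k)}$. So state the proposition as a citation of Szegedy's theorem plus the blow-up identification, as the paper does.
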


This formulation is obtained from Szegedy's usual coset formulation by pulling the host graph back to two copies of $S_n$; this regular blow-up does not change normalized bipartite homomorphism densities.

We now introduce notation for Cayley type kernels associated with functions on finite groups. Let $\Gamma$ be a finite group and let
\[
    a:\Gamma\to\mathbb{R}
\]
be a real-valued function. We write
\[
    \operatorname{Cay}_{\mathrm{bip}}(\Gamma,a)
\]
for the bipartite Cayley kernel on two copies of $\Gamma$ whose value from $x$ on the left to $y$ on the right is
\[
    a(x^{-1}y).
\]
If $a$ is non-negative, this may be viewed as a weighted bipartite graph. In particular, if $A\subseteq\Gamma$ and $a=\mathbf{1}_A$, then this is the bipartite version of the directed Cayley graph with connection set $A$.

For a bipartite graph $H=(X\sqcup Y,E_H)$, define
\[
    t_{\mathrm{Cay}}(H;\Gamma,a)
    =
    \mathbb{E}_{\phi:X\to\Gamma,\ \psi:Y\to\Gamma}
    \prod_{xy\in E_H}
    a\bigl(\phi(x)^{-1}\psi(y)\bigr).
\]
When $a$ is the indicator function of a subset of $\Gamma$, this agrees with the usual bipartite homomorphism density into the corresponding bipartite Cayley graph:
\[
    t_{\mathrm{Cay}}(H;\Gamma,\mathbf{1}_A)
    =
    t_{\mathrm{bip}}\bigl(H,\operatorname{Cay}_{\mathrm{bip}}(\Gamma,\mathbf{1}_A)\bigr).
\]
In general, \(t_{\mathrm{Cay}}(H;\Gamma,a)\) should be regarded as the homomorphism density of \(H\) in the Cayley kernel associated with \(a\). In particular,
\[
    t_{\mathrm{Cay}}(K_2;\Gamma,a)
    =
    \mathbb{E}_{z\in\Gamma}a(z).
\]
With this notation,
\[
    G_{\mathrm{Cay}}(S_n;T_1,T_2)
    =
    \operatorname{Cay}_{\mathrm{bip}}\bigl(S_n,\mathbf{1}_{T_1T_2}\bigr).
\]
Thus Proposition~\ref{prop:szegedy} says that, in order to prove that $H$ is strong Sidorenko, it suffices to verify the Sidorenko inequality for the Cayley type bipartite hosts
\[
    \operatorname{Cay}_{\mathrm{bip}}\bigl(S_n,\mathbf{1}_{T_1T_2}\bigr),
    \qquad T_1,T_2\leq S_n.
\]

Throughout the paper, whenever a bipartite graph
\[
    H=(X\sqcup Y,E_H)
\]
is counted in a directed Cayley graph or Cayley kernel, we interpret it as being counted in the bipartite Cayley kernel
\[
    \operatorname{Cay}_{\mathrm{bip}}(\Gamma,a),
\]
with vertices of $X$ mapped to the left copy of $\Gamma$ and vertices of $Y$ mapped to the right copy of $\Gamma$.

We now define the conjugacy class average of a function on a finite group. For a function $a:\Gamma\to\mathbb{R}$, define
\[
    a^{\mathrm{cl}}(z)
    =
    \mathbb{E}_{\gamma\in\Gamma}
    a(\gamma^{-1}z\gamma).
\]
Thus $a^{\mathrm{cl}}$ is a class function, i.e. it is constant on conjugacy classes. We define the conjugacy class average of the Cayley kernel
\[
    \operatorname{Cay}_{\mathrm{bip}}(\Gamma,a)
\]
to be
\[
    \operatorname{Cay}_{\mathrm{bip}}^{\mathrm{cl}}(\Gamma,a)
    :=
    \operatorname{Cay}_{\mathrm{bip}}(\Gamma,a^{\mathrm{cl}}).
\]
Since conjugation preserves the uniform measure on $\Gamma$, the averaging operation preserves the edge density:
\[
    t_{\mathrm{Cay}}(K_2;\Gamma,a^{\mathrm{cl}})
    =
    t_{\mathrm{Cay}}(K_2;\Gamma,a)
    =
    \mathbb{E}_{z\in\Gamma}a(z).
\]
In particular, if $A\subseteq\Gamma$, then
\[
    t_{\mathrm{Cay}}\bigl(K_2;\Gamma,(\mathbf{1}_A)^{\mathrm{cl}}\bigr)
    =
    \frac{|A|}{|\Gamma|}.
\]

Our first main result says that Sidorenko's conjecture can be approached by comparing the Cayley type hosts appearing in Szegedy's reduction with their conjugacy class averages.

\begin{theorem}\label{thm:conj-average-reduction}
Let $H=(X\sqcup Y,E_H)$ be a bipartite graph. Suppose that for every finite group
$\Gamma$ and every non-negative function
\[
    a:\Gamma\to\mathbb{R}_{\geq 0},
\]
we have
\[
    t_{\mathrm{Cay}}(H;\Gamma,a)
    \geq
    t_{\mathrm{Cay}}\bigl(H;\Gamma,a^{\mathrm{cl}}\bigr).
\]
Then $H$ is strong Sidorenko. In particular, $H$ is Sidorenko.
\end{theorem}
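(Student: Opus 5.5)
The plan is to combine Proposition~\ref{prop:szegedy} with the hypothesis, which reduces the problem to Sidorenko-type inequalities for class-function kernels. By Proposition~\ref{prop:szegedy}, it suffices to show that for every $n$ and all subgroups $T_1,T_2\le S_n$, setting $a=\mathbf{1}_{T_1T_2}$ and $p=|T_1T_2|/n!$, we have
\[
t_{\mathrm{Cay}}(H;S_n,a)\ge p^{e(H)} .
\]
Apply the hypothesis with $\Gamma=S_n$ and this non-negative $a$. This gives $t_{\mathrm{Cay}}(H;S_n,a)\ge t_{\mathrm{Cay}}(H;S_n,a^{\mathrm{cl}})$. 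Since conjugacy averaging preserves edge density, $\mathbb{E}\,a^{\mathrm{cl}}=p$. Everything therefore reduces to the following claim.

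\textbf{Claim.} For every finite group $\Gamma$ and every non-negative class function $f$ on $\Gamma$,
\[
t_{\mathrm{Cay}}(H;\Gamma,f)\ge (\mathbb{E}f)^{e(H)} .
\]

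This claim is the main obstacle, and it is where the hypothesis must be used again, now on a cleverly chosen larger group. The first idea is a mixing construction: embed $\Gamma$ as a subgroup of a larger group $\Gamma'$ in which conjugation spreads mass almost uniformly. Natural choices are $\Gamma'=\mathrm{Sym}(\Gamma)$ via the regular representation, or a wreath product $\Gamma\wr S_m$. Then define a function $b$ on $\Gamma'$ built from $f$. Two things should hold:
\begin{enumerate}[label=(\roman*)]
\item $t_{\mathrm{Cay}}(H;\Gamma',b)$ can be written exactly in terms of $t_{\mathrm{Cay}}(H;\Gamma,f)$. For $b$ supported on $\Gamma\le\Gamma'$, each connected component of $H$ is forced into a single coset. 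This contributes an explicit factor $[\Gamma':\Gamma]^{-(v(H)-c(H))}$, where $c(H)$ is the number of components.
\item $b^{\mathrm{cl}}$ is close to a constant, so that $t_{\mathrm{Cay}}(H;\Gamma',b^{\mathrm{cl}})\approx(\mathbb{E}b)^{e(H)}$.
\end{enumerate}
The hypothesis then gives $t(H;b)\ge t(H;b^{\mathrm{cl}})$, which transfers back to $f$.

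The index factors will not match directly, since $e(H)\ge v(H)-c(H)$ and the resulting inequality is lossy. I would repair this with a tensor-power argument. Replace $f$ by $f^{\otimes k}$ on $\Gamma^k$; this is still a class function, and $t_{\mathrm{Cay}}(H;\Gamma^k,f^{\otimes k})=t_{\mathrm{Cay}}(H;\Gamma,f)^k$. Then take $k$-th roots, so that any fixed multiplicative loss disappears in the limit.

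If the mixing construction proves awkward, the fallback is representation theory. Expand $f=\sum_\rho c_\rho\chi_\rho$, and use that the kernel $f(x^{-1}y)$ commutes with both the left and right regular actions. The aim would then be to express $t_{\mathrm{Cay}}(H;\Gamma,f)$ as a sum over irreducible representations that is dominated by the trivial representation, whose contribution is $(\mathbb{E}f)^{e(H)}$, and to show that the non-trivial terms contribute non-negatively. I expect this positivity to be the genuinely hard part, which is why I would try the hypothesis-driven mixing and tensor-power route first.

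Once the Claim is in hand, the chain
\[
t_{\mathrm{Cay}}(H;S_n,a)\ge t_{\mathrm{Cay}}(H;S_n,a^{\mathrm{cl}})\ge p^{e(H)}
\]
verifies the condition of Proposition~\ref{prop:szegedy}. Hence $H$ is strong Sidorenko, and in particular Sidorenko.
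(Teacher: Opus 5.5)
\textbf{There is a genuine gap: your Claim is never proved, and neither of your routes can prove it.} Your reduction itself is correct and is the paper's. Proposition~\ref{prop:szegedy}, plus one application of the hypothesis with $\Gamma=S_n$ and $a=\mathbf{1}_{T_1T_2}$, leaves exactly the inequality $t_{\mathrm{Cay}}(H;S_n,(\mathbf{1}_{T_1T_2})^{\mathrm{cl}})\geq p^{e(H)}$.

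The problem is that you then replace this by your Claim for \emph{all} non-negative class functions. That is far stronger than needed. On an abelian group every function is a class function and $a^{\mathrm{cl}}=a$. So the Claim contains the Sidorenko inequality for all bipartite Cayley kernels over all finite abelian groups, where the hypothesis gives no information at all.

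Your mixing route does not close this. Steps (i) and (ii) only give $t_{\mathrm{Cay}}(H;\Gamma,f)\gtrsim[\Gamma':\Gamma]^{-(e(H)-v(H)+c(H))}(\mathbb{E}f)^{e(H)}$. This loss is not a fixed constant under tensor powers. For $f^{\otimes k}$ you need an overgroup of $\Gamma^k$. With $(\Gamma')^k$ the index is $[\Gamma':\Gamma]^k$, and the resulting inequality is just the $k$-th power of the $k=1$ one. With $\mathrm{Sym}(\Gamma^k)$ or a wreath product the index grows even faster. Either way, taking $k$-th roots does not remove the loss.

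Your Fourier fallback also cannot work term by term for general $f$. The scalars $d_\rho\lambda_\rho(f)$ of a non-negative class function can be negative or non-real, already for $f=\mathbf{1}_{\{1\}}$ on $\mathbb{Z}/2$ or $\mathbb{Z}/3$.

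\textbf{The missing idea is to keep the structure of $a$.} For subgroups $L,M$, the class average of $\mathbf{1}_{LM}$ has non-negative Fourier coefficients:
\begin{itemize}
\item Set $P_L^\rho=\frac{1}{|L|}\sum_{\ell\in L}\rho(\ell)$. Then $\sum_{x\in LM}\rho(x)=|LM|\,P_L^\rho P_M^\rho$, so $\widehat{\mathbf{1}_{LM}}(\rho)=\frac{|LM|}{|\Gamma|}P_M^\rho P_L^\rho$.
\item Class averaging replaces this by its normalised trace, $\frac{|LM|}{|\Gamma|d_\rho}\Tr(P_M^\rho P_L^\rho)\,I_{d_\rho}$. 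This is $\geq 0$ because it is the trace of a product of two orthogonal projections.
\end{itemize}
With this, your fallback goes through. In the expansion over $(\rho_1,\dots,\rho_m)$, each term is $\prod_r d_{\rho_r}c_{\rho_r}$ times $\Tr(P_XQ_Y)$. Here $P_X$ and $Q_Y$ are products of the commuting projections obtained by averaging over the vertices of $X$ and of $Y$ respectively. So every term is non-negative, and the all-trivial term equals $p^{e(H)}$.

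The hypothesis is therefore used only once. The second step is an unconditional inequality, valid for every bipartite $H$.
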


By Proposition~\ref{prop:szegedy}, in order to prove Theorem~\ref{thm:conj-average-reduction} it is enough to apply the assumed comparison inequality to the Cayley type hosts appearing in Szegedy's reduction. Thus, in the proof, we only need the comparison for
\[
    \Gamma=S_n
    \qquad\text{and}\qquad
    a=\mathbf{1}_{T_1T_2},
\]
where $T_1,T_2\leq S_n$ are subgroups. In this case,
\[
    a^{\mathrm{cl}}=(\mathbf{1}_{T_1T_2})^{\mathrm{cl}}.
\]

We also prove a positive result for 1-subdivision graphs on conjugacy-averaged Cayley kernels. Recall that, for a graph $H_0$, the \emph{1-subdivision} $\operatorname{Sub}(H_0)$ is obtained from $H_0$ by replacing every edge $uv\in E(H_0)$ with a path
\[
    u-w_{uv}-v,
\]
where the new vertices $w_{uv}$ are distinct for different edges $uv\in E(H_0)$. Subdivision graphs play an important role in the study of Sidorenko's conjecture; see, for example, \cite{chen2024kohayakawa,conlon2018some,im2024sidorenko}.

\begin{theorem}[1-subdivisions on conjugacy-averaged Cayley kernels]\label{thm:subdivision-average}
Let $H_0$ be a finite graph, and let
\[
    H=\operatorname{Sub}(H_0)
\]
be its 1-subdivision. Then for every finite group $\Gamma$ and every real-valued function
\[
    a:\Gamma\to\mathbb{R},
\]
we have
\[
    t_{\mathrm{Cay}}\bigl(H;\Gamma,a^{\mathrm{cl}}\bigr)
    \geq
    t_{\mathrm{Cay}}\bigl(K_2;\Gamma,a^{\mathrm{cl}}\bigr)^{e(H)}.
\]
Equivalently,
\[
    t_{\mathrm{Cay}}\bigl(H;\Gamma,a^{\mathrm{cl}}\bigr)
    \geq
    \left(\mathbb{E}_{g\in\Gamma}a(g)\right)^{2e(H_0)}.
\]
\end{theorem}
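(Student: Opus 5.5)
The plan is to integrate out the degree-two subdivision vertices, which turns $t_{\mathrm{Cay}}(H;\Gamma,a^{\mathrm{cl}})$ into an $H_0$-count in a new class-function kernel. I will then expand that kernel in irreducible characters and show that every term of the expansion is non-negative. The term coming from the trivial character alone should give exactly $(\mathbb{E}_g a(g))^{2e(H_0)}$.

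\textbf{Step 1: integrate out the subdivision vertices.} Write $b=a^{\mathrm{cl}}$, which is a real class function. Place the original vertices $V(H_0)$ on one side and the vertices $w_{uv}$ on the other; the choice of side does not matter, by the symmetry below. Set $g_u=\phi(u)$. Integrating over $\psi(w_{uv})=w$ gives
\[
\mathbb{E}_{w}\, b(g_u^{-1}w)\,b(g_v^{-1}w)=K(g_v^{-1}g_u),
\qquad
K(g):=\mathbb{E}_{z\in\Gamma} b(z)\,b(gz),
\]
using the substitution $w=g_uz$. A change of variables $z\mapsto hzh^{-1}$ shows that $K$ is again a class function. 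Hence
\[
t_{\mathrm{Cay}}(H;\Gamma,b)=\mathbb{E}_{g:V(H_0)\to\Gamma}\prod_{uv\in E(H_0)}K(g_v^{-1}g_u).
\]

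\textbf{Step 2: character expansion of $K$.} Write $b=\sum_\chi c_\chi\chi$ over the irreducible characters of $\Gamma$. Since $b$ is real, $c_{\bar\chi}=\overline{c_\chi}$. The standard convolution identity $\mathbb{E}_z\chi(gz)\psi(z^{-1})=\delta_{\chi\psi}\chi(g)/\chi(1)$, applied with $b(z)=\sum_\chi c_{\bar\chi}\chi(z^{-1})$, gives
\[
K=\sum_\chi |c_\chi|^2\,\frac{\chi}{\chi(1)} .
\]
In particular $K$ is real, satisfies $K(g^{-1})=K(g)$ (so edge orientations are irrelevant), and its trivial coefficient is $|c_1|^2=(\mathbb{E}b)^2=(\mathbb{E}a)^2$. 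Expanding the product over edges gives
\[
t_{\mathrm{Cay}}(H;\Gamma,b)=\sum_{(\chi_e)_{e\in E(H_0)}}\ \prod_e|c_{\chi_e}|^2\cdot T\bigl((\chi_e)\bigr),
\qquad
T=\mathbb{E}_g\prod_{e=uv}\frac{\chi_e(g_v^{-1}g_u)}{\chi_e(1)}.
\]
The all-trivial assignment contributes $(\mathbb{E}a)^{2e(H_0)}$. It therefore suffices to show $T\ge 0$ for every assignment of characters to edges.

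\textbf{Step 3: positivity of each $T$ (the main point).} Fix unitary representations $\rho_e$ on $V_e$ affording $\chi_e$, orient each edge as $e=(u\to v)$, and let $\mathcal W=\bigotimes_e \operatorname{End}(V_e)$ with the Hilbert--Schmidt inner product. The group $\Gamma^{V(H_0)}$ acts unitarily on $\mathcal W$: the element $(g_w)_{w\in V(H_0)}$ sends $\bigotimes_e M_e$ to $\bigotimes_e \rho_e(g_u)M_e\rho_e(g_v)^{-1}$. This is the action of a product of copies of $\Gamma$, and the factors act on different sides of each $M_e$, so it is well defined and unitary.

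Let $I=\bigotimes_e \mathrm{id}_{V_e}$. Since $\langle \mathrm{id},A\rangle_{\mathrm{HS}}=\operatorname{tr}A$, we get
\[
\prod_e \chi_e(g_v^{-1}g_u)=\langle I,\ (g)\cdot I\rangle .
\]
Averaging over $g$ replaces $(g)\cdot$ by the orthogonal projection $P$ onto the invariant vectors. Hence
\[
T=\frac{\langle I,PI\rangle}{\prod_e\chi_e(1)}=\frac{\|PI\|^2}{\prod_e\chi_e(1)}\ge 0 .
\]
The only thing to check carefully here is the conjugation and orientation bookkeeping: that the action is a genuine unitary representation and that the pairing reproduces $\chi_e(g_v^{-1}g_u)$ rather than its conjugate. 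Either form is harmless, because $K$ is real and symmetric. Summing the non-negative terms yields the stated inequality, and the equivalent form follows from $t_{\mathrm{Cay}}(K_2;\Gamma,a^{\mathrm{cl}})=\mathbb{E}_g a(g)$ and $e(H)=2e(H_0)$.
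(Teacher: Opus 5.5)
Your proof is correct, but it orders the same ingredients differently from the paper.

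\textbf{The paper's route.} It Fourier-expands $b=a^{\mathrm{cl}}$ on all $2e(H_0)$ incidences of $H$. It shows each term $\mathcal{T}(\boldsymbol{\rho})=\Tr(P_VQ_E)$ is non-negative using two commuting families of projections, exactly as in Proposition~\ref{prop:averaged-product-host-sidorenko}. Only afterwards does it invoke Fact~\ref{fact:tensor-average-dual}: $Q_e=0$ unless $\rho_{v,e}\simeq\rho_{u,e}^\vee$. On the surviving terms, Fact~\ref{fact:dual-class-coefficients} turns the coefficient into $\prod_e d_{\rho_{u,e}}^2|\lambda_{\rho_{u,e}}|^2\geq 0$.

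\textbf{Your route.} You integrate out each degree-two vertex first, so the contragredient pairing happens at the level of functions. The autocorrelation kernel $K(g)=\mathbb{E}_z b(z)b(gz)$ has character coefficients $|c_\chi|^2/\chi(1)\geq 0$; your convolution identity is the same Schur orthogonality as Fact~\ref{fact:tensor-average-dual}. As a result, the unpaired terms never appear. Positivity of each $T$ then comes from a single invariant-vector projection for $\Gamma^{V(H_0)}$ acting on $\bigotimes_e\operatorname{End}(V_e)$. This gives $\|PI\|^2$ rather than $\Tr(P_VQ_E)$.

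\textbf{What each buys.} Your route isolates a cleaner general statement. For every graph $H_0$, bipartite or not, and every class function whose irreducible-character expansion has non-negative coefficients, the density of $H_0$ in the associated Cayley kernel is at least $(\mathbb{E}K)^{e(H_0)}$. This is a non-bipartite analogue of Proposition~\ref{prop:averaged-product-host-sidorenko}, and the theorem is that statement applied to the positive-definite kernel produced by the subdivision vertices. The paper's route buys uniformity: both main theorems run through the same bipartite projection computation, with the degree-two structure entering only through the vanishing of $Q_e$.
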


Since every even subdivision is the 1-subdivision of another graph, Theorem~\ref{thm:subdivision-average} immediately applies to arbitrary even subdivisions. Indeed, if each edge of a graph $H_0$ is replaced by a path of length $2\ell_e$, then this graph is the 1-subdivision of the graph obtained from $H_0$ by replacing each edge $e$ by a path of length $\ell_e$.

Theorem~\ref{thm:subdivision-average} can be viewed as a generalization of \cite{zhao2025sidorenko}. In that paper, Sidorenko-type inequalities were proved for even subdivisions over finite abelian Cayley hosts; here the ambient group is allowed to be non-abelian, and the Cayley kernel is allowed to arise from an arbitrary real-valued class function.

\section{Preliminaries}

\subsection{Linear-algebraic tools}

All matrices in this paper are over the complex field $\mathbb{C}$. For a square matrix $A$, we write $A^*$ for its conjugate transpose. For matrices $A$ and $B$, we write $A\otimes B$ for their tensor product. We shall use the standard identity
\[
    \Tr(A\otimes B)=\Tr(A)\Tr(B),
\]
whenever $A$ and $B$ are square matrices.

\begin{fact}\label{fact:tensor-product-multiplication}
Let $k$ be a positive integer. For each $i\in[k]$, let $A_i$ and $B_i$ be square matrices of the same size. Then
\[
    \left(\bigotimes_{i=1}^k A_i\right)
    \left(\bigotimes_{i=1}^k B_i\right)
    =
    \bigotimes_{i=1}^k (A_iB_i).
\]
\end{fact}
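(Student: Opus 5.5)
The plan is to reduce to the case $k=2$, the classical mixed-product property of the Kronecker product, and then induct on $k$. Throughout, only the sizes need to match within each index $i$: $A_i$ and $B_i$ are $n_i\times n_i$ matrices. Different $i$ may have different sizes, and both sides of the identity are then $N\times N$ matrices with $N=\prod_i n_i$.

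\emph{Base case ($k=1$).} This is trivial.

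\emph{Case $k=2$.} Let $A,B$ be $n\times n$ and $C,D$ be $m\times m$. I will index the rows and columns of an $nm\times nm$ Kronecker product by pairs $(p,r)$ with $p\in[n]$, $r\in[m]$. The defining rule is then
\[
(A\otimes C)_{(p,r),(q,s)}=A_{pq}C_{rs}.
\]
The $((p,r),(q,s))$ entry of $(A\otimes C)(B\otimes D)$ is obtained by summing over the middle index $(u,v)$. The sum factors as
\[
\sum_{u,v}A_{pu}C_{rv}B_{uq}D_{vs}
=\Bigl(\sum_u A_{pu}B_{uq}\Bigr)\Bigl(\sum_v C_{rv}D_{vs}\Bigr)
=(AB)_{pq}(CD)_{rs}.
\]
The right-hand side is exactly the $((p,r),(q,s))$ entry of $(AB)\otimes(CD)$. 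The only care needed is to fix the identification of pairs with a single index, e.g.\ lexicographic order. The same identification is used for every Kronecker product, so the entrywise computation is legitimate.

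\emph{Inductive step.} Assume the identity holds for $k-1$ factors. By associativity of the tensor product (again under the lexicographic identification of indices),
\[
\bigotimes_{i=1}^k A_i=\Bigl(\bigotimes_{i=1}^{k-1}A_i\Bigr)\otimes A_k,
\]
and similarly for the $B_i$. Apply the case $k=2$ with $A=\bigotimes_{i<k}A_i$, $B=\bigotimes_{i<k}B_i$, $C=A_k$, $D=B_k$. This gives
\[
\Bigl(\bigotimes_{i<k}A_i\Bigr)\Bigl(\bigotimes_{i<k}B_i\Bigr)\otimes(A_kB_k).
\]
The induction hypothesis turns the first factor into $\bigotimes_{i<k}(A_iB_i)$, which completes the step.

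There is no genuine obstacle here. The only point requiring attention is the bookkeeping of multi-indices, namely that associativity of $\otimes$ holds on the nose under the chosen identification. If one prefers to avoid induction, the same factorisation of the sum over middle indices works directly with $k$-tuples of indices, and gives the general case in one line.
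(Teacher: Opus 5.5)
Your argument is correct: the entrywise factorisation $\sum_{u,v}A_{pu}C_{rv}B_{uq}D_{vs}=(AB)_{pq}(CD)_{rs}$ settles $k=2$. Induction through $\bigotimes_{i=1}^k A_i=\bigl(\bigotimes_{i<k}A_i\bigr)\otimes A_k$ then finishes it, as does the one-shot computation with $\bigl(\bigotimes_i A_i\bigr)_{\mathbf p,\mathbf q}=\prod_i (A_i)_{p_iq_i}$ that you mention. The paper gives no proof of this statement; it records it as a standard fact, so there is no argument of its own to compare yours against.

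The paper applies the fact to operators on $\bigotimes_r V_{\rho_r}$ rather than to explicit Kronecker matrices. In that setting a coordinate-free check is shorter. Both sides are linear maps sending each pure tensor $v_1\otimes\cdots\otimes v_k$ to $A_1B_1v_1\otimes\cdots\otimes A_kB_kv_k$, and pure tensors span, so the two maps agree. This avoids the multi-index and associativity bookkeeping you flag. Your version has the merit of being fully explicit at the matrix level. You also correctly allow the sizes $n_i$ to vary with $i$, which is what the paper needs, since the dimensions $d_{\rho_r}$ differ across factors.
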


A square matrix $A$ is called \emph{positive semidefinite} if $A=A^*$ and all eigenvalues of $A$ are non-negative. Equivalently,
\[
    \langle Av,v\rangle\geq 0
    \qquad\text{for every vector }v.
\]
A matrix $P$ is called an \emph{orthogonal projection} if
\[
    P^2=P
    \qquad\text{and}\qquad
    P=P^*.
\]
Every orthogonal projection is positive semidefinite.

We shall use the following elementary facts.

\begin{fact}\label{fact:psd-trace-and-projections}
Let $A$ and $B$ be positive semidefinite matrices of the same size. Then
\[
    \Tr(AB)\geq 0.
\]
Moreover, if $P$ and $Q$ are commuting orthogonal projections, then $PQ$ is an orthogonal projection.
\end{fact}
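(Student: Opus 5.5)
The plan is to prove both claims directly from the spectral theorem and the definitions, with no earlier results needed.

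For the trace inequality, I will diagonalise $A$. Since $A=A^*$, the spectral theorem gives an orthonormal basis $u_1,\dots,u_m$ of eigenvectors with eigenvalues $\lambda_1,\dots,\lambda_m$, so that
\[
    A=\sum_{i=1}^m \lambda_i u_iu_i^*,
\]
and $\lambda_i\geq 0$ because $A$ is positive semidefinite. By linearity and cyclicity of the trace,
\[
    \Tr(AB)=\sum_{i=1}^m \lambda_i \Tr(u_iu_i^*B)=\sum_{i=1}^m \lambda_i\, u_i^*Bu_i=\sum_{i=1}^m \lambda_i\langle Bu_i,u_i\rangle .
\]
Each term $\langle Bu_i,u_i\rangle$ is non-negative by the quadratic-form characterisation of positive semidefiniteness of $B$. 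Since every $\lambda_i\geq 0$, the sum is non-negative. An equivalent route writes $\Tr(AB)=\Tr(B^{1/2}AB^{1/2})$ and observes that $B^{1/2}AB^{1/2}$ is positive semidefinite. I prefer the eigen-expansion because it avoids constructing square roots.

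For the projection claim, let $P^2=P=P^*$, $Q^2=Q=Q^*$ and $PQ=QP$. Using commutativity in the middle,
\[
    (PQ)^2=P(QP)Q=P(PQ)Q=P^2Q^2=PQ.
\]
For self-adjointness, $(PQ)^*=Q^*P^*=QP=PQ$. Hence $PQ$ is an orthogonal projection.

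Neither part involves a real obstacle. The only point needing care is the identity $\Tr(u_iu_i^*B)=u_i^*Bu_i$, which is cyclicity of the trace applied to a rank-one matrix. Without the commuting hypothesis the projection claim would fail, and this is exactly where it is used.
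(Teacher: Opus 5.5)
Your proof is correct and is essentially the paper's. The paper writes $\Tr(AB)=\Tr(A^{1/2}BA^{1/2})$ and notes that $A^{1/2}BA^{1/2}$ is positive semidefinite; your expansion $\sum_i\lambda_i\langle Bu_i,u_i\rangle$ is exactly that trace computed in an eigenbasis of $A$, so the difference is cosmetic, and your projection argument is identical to the paper's.
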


\begin{proof}
Since $A$ is positive semidefinite, it has a positive semidefinite square root $A^{1/2}$. Hence
\[
    \Tr(AB)=\Tr(A^{1/2}BA^{1/2}).
\]
The matrix $A^{1/2}BA^{1/2}$ is positive semidefinite, and therefore its trace is non-negative.

If $P$ and $Q$ are commuting orthogonal projections, then
\[
    (PQ)^2=PQPQ=P^2Q^2=PQ
\]
and
\[
    (PQ)^*=Q^*P^*=QP=PQ.
\]
Thus $PQ$ is an orthogonal projection.
\end{proof}

\subsection{Fourier analysis on finite groups}

Throughout this subsection, let $\Gamma$ be a finite group with identity element $e$. All representations are over $\mathbb{C}$. We write $\GL(V)$ for the group of invertible linear operators on a finite-dimensional complex vector space $V$.

A \emph{representation} of $\Gamma$ on $V$ is a homomorphism
\[
    \rho:\Gamma\to\GL(V).
\]
A representation $\rho$ is called \emph{unitary} if $V$ is equipped with a Hermitian inner product and each operator $\rho(g)$ is unitary. Equivalently, after choosing an orthonormal basis,
\[
    \rho(g)^{-1}=\rho(g)^*
    \qquad\text{for every }g\in\Gamma.
\]
A subspace $W\subseteq V$ is called $\Gamma$-\emph{invariant} if
\[
    \rho(g)W\subseteq W
    \qquad\text{for every }g\in\Gamma.
\]
A representation is \emph{irreducible} if its only invariant subspaces are $\{0\}$ and $V$.

Since $\Gamma$ is finite, every finite-dimensional representation of $\Gamma$ is unitarizable. Thus, throughout the paper, we choose all irreducible representations to be unitary. Let $\widehat{\Gamma}$ be a fixed set of representatives of the irreducible unitary representations of $\Gamma$, one from each equivalence class. For $\rho\in\widehat{\Gamma}$, write
\[
    d_\rho:=\dim\rho.
\]

Since $\Gamma$ is finite, we identify $L^2(\Gamma)$ with the space of all functions $f:\Gamma\to\mathbb{C}$, equipped with the normalized inner product
\[
    \langle f,h\rangle_{L^2(\Gamma)}
    :=
    \mathbb{E}_{x\in\Gamma}f(x)\overline{h(x)}
    =
    \frac{1}{|\Gamma|}\sum_{x\in\Gamma}f(x)\overline{h(x)}.
\]
Throughout the paper, we use the normalized averaging notation
\[
    \mathbb{E}_{x\in\Gamma}\varphi(x)
    :=
    \frac{1}{|\Gamma|}\sum_{x\in\Gamma}\varphi(x).
\]

For a unitary representation $\rho$ of $\Gamma$ and a function $f\in L^2(\Gamma)$, define the \emph{Fourier coefficient} of $f$ at $\rho$ by
\begin{equation}\label{eq:FT-def}
    \widehat f(\rho)
    :=
    \mathbb{E}_{x\in\Gamma}
    f(x)\rho(x)^{-1}
    =
    \mathbb{E}_{x\in\Gamma}
    f(x)\rho(x)^*.
\end{equation}
With this normalization, the Fourier inversion formula is
\begin{equation}\label{eq:Fourier-inversion}
    f(g)
    =
    \sum_{\rho\in\widehat{\Gamma}}
    d_\rho\,\Tr\!\bigl(\widehat f(\rho)\rho(g)\bigr)
    \qquad
    (g\in\Gamma).
\end{equation}

We first record the standard averaging projection associated with a subgroup.

\begin{fact}[Averaging over a subgroup]\label{fact:subgroup-projection}
Let $L\leq\Gamma$ be a subgroup, and let $\rho$ be a unitary representation of $\Gamma$. Define
\[
    P_L^\rho
    :=
    \frac{1}{|L|}\sum_{\ell\in L}\rho(\ell).
\]
Equivalently, if $\mathbf{1}_L$ denotes the indicator function of $L$, then
\[
    P_L^\rho
    =
    \frac{|\Gamma|}{|L|}\,\widehat{\mathbf{1}_L}(\rho)^*.
\]
Then $P_L^\rho$ is the orthogonal projection onto the $L$-fixed subspace
\[
    V_\rho^L
    :=
    \{v\in V_\rho:\rho(\ell)v=v\text{ for every }\ell\in L\}.
\]
\end{fact}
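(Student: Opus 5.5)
The statement just before this point is Fact~\ref{fact:subgroup-projection}, the subgroup averaging projection; that is what I will prove. I would verify the identities directly from unitarity and the group structure of $L$.

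\textbf{Step 1: the two formulas agree.} By \eqref{eq:FT-def},
\[
\widehat{\mathbf{1}_L}(\rho)=\frac{1}{|\Gamma|}\sum_{\ell\in L}\rho(\ell)^*,
\]
so
\[
\widehat{\mathbf{1}_L}(\rho)^*=\frac{1}{|\Gamma|}\sum_{\ell\in L}\rho(\ell).
\]
Multiplying by $|\Gamma|/|L|$ gives $P_L^\rho$.

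\textbf{Step 2: $P_L^\rho$ is an orthogonal projection.}
\begin{itemize}
\item Self-adjointness: by unitarity, $\rho(\ell)^*=\rho(\ell^{-1})$. Since $\ell\mapsto\ell^{-1}$ is a bijection of $L$, we get $(P_L^\rho)^*=P_L^\rho$.
\item Invariance: for every $m\in L$, $\rho(m)P_L^\rho=P_L^\rho$, because $\ell\mapsto m\ell$ permutes $L$.
\item Idempotence: averaging the invariance identity over $m\in L$ gives $(P_L^\rho)^2=P_L^\rho$.
\end{itemize}

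\textbf{Step 3: the range is exactly $V_\rho^L$.}
\begin{itemize}
\item The range lies in $V_\rho^L$: the invariance identity $\rho(m)P_L^\rho=P_L^\rho$ shows every vector $P_L^\rho w$ is fixed by each $\rho(m)$, $m\in L$.
\item $V_\rho^L$ lies in the range: if $v\in V_\rho^L$, then each term satisfies $\rho(\ell)v=v$, so $P_L^\rho v=\frac{1}{|L|}\sum_{\ell\in L}v=v$.
\end{itemize}
Hence $P_L^\rho$ is the orthogonal projection onto $V_\rho^L$.

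I expect no real obstacle here. The only point needing care is the conjugation convention in \eqref{eq:FT-def}: the Fourier coefficient carries $\rho(x)^*$, which is why the formula for $P_L^\rho$ involves the adjoint $\widehat{\mathbf{1}_L}(\rho)^*$.
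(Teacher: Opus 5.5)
Your proposal is correct and follows essentially the same route as the paper: self-adjointness via inversion on $L$, idempotence from the translation-invariance of the sum over $L$ (the paper expands $\bigl(\sum_{\ell}\rho(\ell)\bigr)^2$ directly, which is the same computation), and identification of the range with $V_\rho^L$ via the invariance identity together with the trivial inclusion. Your Step~1 also checks the Fourier-coefficient formula $P_L^\rho=\frac{|\Gamma|}{|L|}\widehat{\mathbf{1}_L}(\rho)^*$, which the paper only asserts.
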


\begin{proof}
Since $L$ is a subgroup, the map $\ell\mapsto \ell^{-1}$ is a bijection of $L$. Using unitarity,
\[
    \sum_{\ell\in L}\rho(\ell)
    =
    \sum_{\ell\in L}\rho(\ell^{-1})
    =
    \sum_{\ell\in L}\rho(\ell)^*.
\]
Thus $P_L^\rho=(P_L^\rho)^*$. Moreover,
\[
    \left(\sum_{\ell\in L}\rho(\ell)\right)^2
    =
    \sum_{\ell_1,\ell_2\in L}\rho(\ell_1\ell_2)
    =
    |L|\sum_{\ell\in L}\rho(\ell),
\]
and hence $(P_L^\rho)^2=P_L^\rho$. Therefore $P_L^\rho$ is an orthogonal projection.

It remains to identify its image. If $v\in V_\rho^L$, then clearly $P_L^\rho v=v$. Conversely, if $P_L^\rho v=v$, then for any $h\in L$,
\[
    \rho(h)P_L^\rho v
    =
    \frac{1}{|L|}\sum_{\ell\in L}\rho(h\ell)v
    =
    \frac{1}{|L|}\sum_{\ell\in L}\rho(\ell)v
    =
    P_L^\rho v
    =
    v.
\]
Thus $v$ is fixed by every element of $L$.
\end{proof}

We next discuss class functions and conjugacy class averages.

A function $f:\Gamma\to\mathbb{C}$ is called a \emph{class function} if it is invariant under conjugation, i.e.
\[
    f(\gamma^{-1}x\gamma)=f(x)
    \qquad\text{for all }x,\gamma\in\Gamma.
\]
Equivalently, $f$ is constant on conjugacy classes.

For an arbitrary function $f:\Gamma\to\mathbb{C}$, define its \emph{conjugacy class average} by
\[
    f^{\mathrm{cl}}(x)
    :=
    \mathbb{E}_{\gamma\in\Gamma}
    f(\gamma^{-1}x\gamma).
\]
Then $f^{\mathrm{cl}}$ is a class function.

\begin{fact}[Class functions and conjugacy-class averaging]\label{fact:class-function-fourier}
A function $f:\Gamma\to\mathbb{C}$ is a class function if and only if, for every $\rho\in\widehat{\Gamma}$, the matrix $\widehat f(\rho)$ is a scalar multiple of the identity.

More precisely, if $f$ is a class function, then
\[
    \widehat f(\rho)=\lambda_\rho(f)I_{d_\rho},
\]
where
\[
    \lambda_\rho(f)
    =
    \frac{1}{d_\rho}\Tr\bigl(\widehat f(\rho)\bigr)
    =
    \frac{1}{d_\rho}
    \mathbb{E}_{x\in\Gamma}
    f(x)\chi_\rho(x^{-1}),
\]
and $\chi_\rho(x):=\Tr(\rho(x))$ is the character of $\rho$.

Moreover, for every $f:\Gamma\to\mathbb{C}$ and every $\rho\in\widehat{\Gamma}$,
\begin{equation}\label{eq:class-average-fourier}
    \widehat{f^{\mathrm{cl}}}(\rho)
    =
    \frac{1}{d_\rho}\Tr\bigl(\widehat f(\rho)\bigr)I_{d_\rho}.
\end{equation}
\end{fact}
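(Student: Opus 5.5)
We need to prove Fact \ref{fact:class-function-fourier}, and the plan rests on two standard tools: Schur's lemma and the Fourier inversion formula \eqref{eq:Fourier-inversion}. The first step is a conjugation identity for Fourier coefficients. For $\gamma\in\Gamma$, set $f^\gamma(x):=f(\gamma^{-1}x\gamma)$. Substituting $x\mapsto \gamma x\gamma^{-1}$ in \eqref{eq:FT-def} gives
\[
    \widehat{f^\gamma}(\rho)
    =
    \mathbb{E}_{x}f(x)\rho(\gamma x\gamma^{-1})^{*}
    =
    \rho(\gamma)\,\widehat f(\rho)\,\rho(\gamma)^{*}.
\]
Since $f^{\mathrm{cl}}=\mathbb{E}_\gamma f^\gamma$, linearity yields
\[
    \widehat{f^{\mathrm{cl}}}(\rho)=\mathbb{E}_{\gamma}\rho(\gamma)\widehat f(\rho)\rho(\gamma)^{-1}=:M.
\]

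Next, $M$ is an intertwiner: for every $h\in\Gamma$ we have $\rho(h)M\rho(h)^{-1}=M$, because $\gamma\mapsto h\gamma$ permutes $\Gamma$. Since $\rho$ is irreducible, Schur's lemma gives $M=cI_{d_\rho}$. Conjugation preserves trace, so $\Tr M=\Tr\widehat f(\rho)$, and therefore $c=\frac{1}{d_\rho}\Tr\widehat f(\rho)$. This proves \eqref{eq:class-average-fourier}.

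The equivalence follows from this. If $f$ is a class function, then $f=f^{\mathrm{cl}}$, so $\widehat f(\rho)$ is scalar with $\lambda_\rho(f)=\frac{1}{d_\rho}\Tr\widehat f(\rho)$. Expanding the trace, $\Tr\widehat f(\rho)=\mathbb{E}_x f(x)\Tr\rho(x^{-1})=\mathbb{E}_x f(x)\chi_\rho(x^{-1})$, which is the stated formula for $\lambda_\rho(f)$. Conversely, suppose every $\widehat f(\rho)=\lambda_\rho I$. Then inversion \eqref{eq:Fourier-inversion} gives $f(g)=\sum_\rho d_\rho\lambda_\rho\chi_\rho(g)$. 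Characters are class functions by cyclicity of trace, so $f$ is a class function.

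No step is a genuine obstacle. The only points needing care are the direction of conjugation in the substitution, so that we get $\rho(\gamma)\widehat f\rho(\gamma)^*$ rather than the reverse, and invoking Schur's lemma for the intertwiner $M$ of an irreducible unitary representation over $\mathbb{C}$.
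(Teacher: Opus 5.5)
Your proof is correct and uses the same ingredients as the paper: the conjugation identity $\rho(\gamma)\widehat f(\rho)\rho(\gamma)^{-1}=\widehat{f^\gamma}(\rho)$, Schur's lemma, invariance of trace, and Fourier inversion for the converse. The only difference is the order of the argument. You prove \eqref{eq:class-average-fourier} first by applying Schur's lemma to the twirl $M$, and then obtain the forward implication as the case $f=f^{\mathrm{cl}}$. The paper instead applies Schur's lemma directly to $\widehat f(\rho)$ for a class function $f$, and then gets \eqref{eq:class-average-fourier} by computing $\Tr\widehat{f^{\mathrm{cl}}}(\rho)$ on the group side, using that characters are class functions.
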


\begin{proof}
Suppose first that $f$ is a class function. For any $\gamma\in\Gamma$,
\[
    \rho(\gamma)\widehat f(\rho)\rho(\gamma)^{-1}
    =
    \mathbb{E}_{x\in\Gamma}
    f(x)\rho(\gamma x^{-1}\gamma^{-1}).
\]
After the change of variables $y=\gamma x\gamma^{-1}$, and using conjugation-invariance of $f$, the right-hand side becomes
\[
    \mathbb{E}_{y\in\Gamma}f(y)\rho(y^{-1})
    =
    \widehat f(\rho).
\]
Thus $\widehat f(\rho)$ commutes with $\rho(\gamma)$ for every $\gamma\in\Gamma$. By Schur's lemma, $\widehat f(\rho)$ is a scalar multiple of the identity.

Conversely, suppose that $\widehat f(\rho)$ is a scalar multiple of the identity for every $\rho\in\widehat{\Gamma}$. By Fourier inversion,
\[
    f(g)
    =
    \sum_{\rho\in\widehat{\Gamma}}
    d_\rho\,\lambda_\rho(f)\chi_\rho(g).
\]
Since every character is a class function, $f$ is a class function.

It remains to prove \eqref{eq:class-average-fourier}. The function $f^{\mathrm{cl}}$ is a class function, so $\widehat{f^{\mathrm{cl}}}(\rho)$ is a scalar matrix. Taking traces and using the definition of $f^{\mathrm{cl}}$,
\[
\begin{aligned}
    \Tr\bigl(\widehat{f^{\mathrm{cl}}}(\rho)\bigr)
    &=
    \mathbb{E}_{x\in\Gamma}
    f^{\mathrm{cl}}(x)\chi_\rho(x^{-1})\\
    &=
    \mathbb{E}_{x,\gamma\in\Gamma}
    f(\gamma^{-1}x\gamma)\chi_\rho(x^{-1})\\
    &=
    \mathbb{E}_{y,\gamma\in\Gamma}
    f(y)\chi_\rho(\gamma y^{-1}\gamma^{-1})\\
    &=
    \mathbb{E}_{y\in\Gamma}
    f(y)\chi_\rho(y^{-1})
    =
    \Tr\bigl(\widehat f(\rho)\bigr),
\end{aligned}
\]
where we used that characters are class functions. This proves \eqref{eq:class-average-fourier}.
\end{proof}

We shall also use the following standard consequence of Schur's lemma; see, for instance, \cite{serre1977linear}.

\begin{fact}[Tensor averages of irreducible representations]\label{fact:tensor-average-dual}
Let $\rho_1,\rho_2\in\widehat{\Gamma}$ be irreducible unitary representations. Then
\[
    \mathbb{E}_{g\in\Gamma}\rho_1(g)\otimes\rho_2(g)
\]
is the orthogonal projection onto the invariant subspace
\[
    (V_{\rho_1}\otimes V_{\rho_2})^\Gamma.
\]
In particular,
\[
    \mathbb{E}_{g\in\Gamma}\rho_1(g)\otimes\rho_2(g)\neq 0
\]
if and only if
\[
    \rho_2\simeq\rho_1^\vee,
\]
where $\rho_1^\vee$ denotes the contragredient representation of $\rho_1$.
\end{fact}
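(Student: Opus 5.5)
The plan is to deduce Fact~\ref{fact:tensor-average-dual} from Fact~\ref{fact:subgroup-projection} together with Schur's lemma. We apply Fact~\ref{fact:subgroup-projection} with $L=\Gamma$ to the tensor product representation
\[
    \sigma:=\rho_1\otimes\rho_2,\qquad \sigma(g)=\rho_1(g)\otimes\rho_2(g),
\]
acting on $V_{\rho_1}\otimes V_{\rho_2}$ with the tensor inner product. By Fact~\ref{fact:tensor-product-multiplication}, $\sigma(gh)=\sigma(g)\sigma(h)$. Moreover $\sigma(g)^*=\rho_1(g)^*\otimes\rho_2(g)^*=\sigma(g)^{-1}$, so $\sigma$ is a unitary representation. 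Fact~\ref{fact:subgroup-projection} then says that $P^\sigma_\Gamma=\mathbb{E}_{g}\sigma(g)$ is the orthogonal projection onto $(V_{\rho_1}\otimes V_{\rho_2})^\Gamma$. This proves the first claim.

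For the second claim, we note that the projection is nonzero exactly when the invariant subspace is nonzero. To identify this subspace, we use the canonical isomorphism
\[
    V_{\rho_1}\otimes V_{\rho_2}\cong\operatorname{Hom}(V_{\rho_2}^*,V_{\rho_1}),\qquad v\otimes w\mapsto\bigl(\xi\mapsto \xi(w)v\bigr).
\]
This isomorphism intertwines $\sigma$ with the conjugation action $T\mapsto\rho_1(g)\,T\,\rho_2^\vee(g)^{-1}$, where $\rho_2^\vee(g)=\rho_2(g^{-1})^{\mathsf T}$ acts on $V_{\rho_2}^*$. Hence the $\Gamma$-invariant vectors correspond exactly to the intertwiners $\rho_2^\vee\to\rho_1$.

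Since $\rho_1$ and $\rho_2^\vee$ are both irreducible, Schur's lemma shows that a nonzero intertwiner exists if and only if $\rho_1\simeq\rho_2^\vee$. This is equivalent to $\rho_2\simeq\rho_1^\vee$, because $(\rho^\vee)^\vee\simeq\rho$.

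The only step requiring care is checking that the identification is equivariant. Concretely, in coordinates $\rho_2^\vee(g)=\overline{\rho_2(g)}$ by unitarity, and an invariant tensor $\sum T_{ij}e_i\otimes f_j$ satisfies $\rho_1(g)\,T\,\rho_2(g)^{\mathsf T}=T$ for all $g$. This can be rewritten as
\[
    \rho_1(g)\,T=T\,\overline{\rho_2(g)},
\]
which is precisely the intertwining relation needed above.
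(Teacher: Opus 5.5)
Your proof is correct: Fact~\ref{fact:subgroup-projection} with $L=\Gamma$, applied to the unitary representation $\rho_1\otimes\rho_2$, gives the projection claim. Identifying invariant tensors with intertwiners $\rho_2^\vee\to\rho_1$ and applying Schur's lemma then gives the nonvanishing criterion. The paper gives no proof of its own and only cites this as a standard consequence of Schur's lemma (Serre), so your argument spells out exactly the reasoning the paper intends.
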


For real-valued class functions, the Fourier scalars at dual representations are complex conjugates.

\begin{fact}[Dual representations and real-valued class functions]\label{fact:dual-class-coefficients}
Let $f:\Gamma\to\mathbb{R}$ be a real-valued class function. For every irreducible unitary representation $\rho$, write
\[
    \widehat f(\rho)=\lambda_\rho(f)I_{d_\rho}.
\]
Then
\[
    \lambda_{\rho^\vee}(f)
    =
    \overline{\lambda_\rho(f)}.
\]
Here, if the contragredient representation $\rho^\vee$ is not the chosen representative in $\widehat{\Gamma}$, the notation $\lambda_{\rho^\vee}(f)$ means the scalar attached to the unique representative of its equivalence class.
\end{fact}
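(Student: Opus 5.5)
We prove Fact~\ref{fact:dual-class-coefficients} by computing both scalars through the character formula of Fact~\ref{fact:class-function-fourier} and comparing them directly.

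First, $\lambda_\rho(f)$ depends only on the equivalence class of $\rho$. The formula $\lambda_\rho(f)=\frac{1}{d_\rho}\mathbb{E}_{x\in\Gamma}f(x)\chi_\rho(x^{-1})$ involves only the character, and characters are invariant under equivalence. So we may realise $\rho^\vee$ concretely by $\rho^\vee(g)=\overline{\rho(g)}$, the entrywise complex conjugate. Since $\rho$ is unitary, $\overline{\rho(g)}=(\rho(g)^{-1})^{T}$, which is exactly the contragredient action on $V_\rho^*$ in the dual basis. Hence
\[
\chi_{\rho^\vee}(x)=\overline{\chi_\rho(x)}, \qquad d_{\rho^\vee}=d_\rho.
\]

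Second, we substitute this into the formula. Using that $f$ is real-valued,
\[
\lambda_{\rho^\vee}(f)=\frac{1}{d_\rho}\mathbb{E}_{x\in\Gamma}f(x)\overline{\chi_\rho(x^{-1})}
=\overline{\frac{1}{d_\rho}\mathbb{E}_{x\in\Gamma}f(x)\chi_\rho(x^{-1})}
=\overline{\lambda_\rho(f)}.
\]

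Alternatively, one can argue at the level of matrices. Directly from \eqref{eq:FT-def}, $\widehat f(\overline{\rho})=\mathbb{E}_x f(x)\overline{\rho(x)^*}=\overline{\widehat f(\rho)}$ because $f$ is real. This equals $\overline{\lambda_\rho(f)}I_{d_\rho}$. We then transfer the result to the chosen representative in $\widehat{\Gamma}$. If $U$ is a unitary intertwiner with $\rho'=U\overline{\rho}U^{*}$, then $\widehat f(\rho')=U\widehat f(\overline\rho)U^*$, which is the same scalar matrix.

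The only point needing care is this well-definedness for non-chosen representatives. It is handled either by the character formula or by the intertwiner conjugation just described; no step is a genuine obstacle.
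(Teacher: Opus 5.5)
Your proof is correct and is essentially the paper's argument. Both proofs compute $\lambda_{\rho^\vee}(f)$ from the character formula $\lambda_\rho(f)=\frac{1}{d_\rho}\mathbb{E}_{x\in\Gamma}f(x)\chi_\rho(x^{-1})$ and use that $f$ is real together with unitarity. You do this via $\chi_{\rho^\vee}=\overline{\chi_\rho}$, while the paper uses $\chi_{\rho^\vee}(x)=\chi_\rho(x^{-1})$ and $\overline{\chi_\rho(x^{-1})}=\chi_\rho(x)$. Your explicit check that the scalar depends only on the equivalence class, and your alternative matrix argument via $\widehat f(\overline{\rho})=\overline{\widehat f(\rho)}$ and unitary conjugation, are both valid and make rigorous the point about non-chosen representatives that the paper settles by convention.
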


\begin{proof}
By Fact~\ref{fact:class-function-fourier},
\[
    \lambda_\rho(f)
    =
    \frac{1}{d_\rho}
    \mathbb{E}_{x\in\Gamma}
    f(x)\chi_\rho(x^{-1}).
\]
Since
\[
    \chi_{\rho^\vee}(x)=\chi_\rho(x^{-1}),
\]
we have
\[
    \lambda_{\rho^\vee}(f)
    =
    \frac{1}{d_\rho}
    \mathbb{E}_{x\in\Gamma}
    f(x)\chi_\rho(x).
\]
Because $f$ is real-valued and $\rho$ is unitary,
\[
    \overline{\chi_\rho(x^{-1})}
    =
    \chi_\rho(x).
\]
Therefore
\[
    \lambda_{\rho^\vee}(f)
    =
    \overline{\lambda_\rho(f)}.
\]
\end{proof}

The following consequence for products of two subgroups will be used in the proof of the reduction theorem.

\begin{fact}[Products of two subgroups]\label{fact:product-subgroups-class-average}
Let $L,M\leq\Gamma$ be subgroups, and let
\[
    F:=\mathbf{1}_{LM}
\]
be the indicator function of the product set
\[
    LM=\{\ell m:\ell\in L,\ m\in M\}.
\]
Then, for every $\rho\in\widehat{\Gamma}$,
\[
    \widehat{F^{\mathrm{cl}}}(\rho)
    =
    c_\rho(L,M)I_{d_\rho},
\]
where
\[
    c_\rho(L,M)
    =
    \frac{|LM|}{|\Gamma|\,d_\rho}
    \Tr\!\bigl(P_M^\rho P_L^\rho\bigr)
    =
    \frac{|LM|}{|\Gamma|\,d_\rho}
    \Tr\!\bigl(P_L^\rho P_M^\rho\bigr)
    \geq 0.
\]
In particular, the Fourier coefficients of $F^{\mathrm{cl}}$ are non-negative scalar multiples of the identity.
\end{fact}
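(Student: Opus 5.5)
We compute $\widehat{F}(\rho)$ first and then apply \eqref{eq:class-average-fourier}. By that identity,
\[
    \widehat{F^{\mathrm{cl}}}(\rho)=\frac{1}{d_\rho}\Tr\bigl(\widehat F(\rho)\bigr)I_{d_\rho},
\]
so it suffices to show that
\[
    \Tr\bigl(\widehat F(\rho)\bigr)=\frac{|LM|}{|\Gamma|}\Tr\bigl(P_M^\rho P_L^\rho\bigr),
\]
together with the symmetry of the trace and its non-negativity.

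\textbf{Step 1: rewrite $\mathbf{1}_{LM}$ as a convolution.} The multiplication map $L\times M\to LM$, $(\ell,m)\mapsto \ell m$, has fibres of size exactly $|L\cap M|$. Indeed, $\ell m=\ell'm'$ if and only if $\ell^{-1}\ell'=m(m')^{-1}\in L\cap M$. Moreover $|LM|=|L||M|/|L\cap M|$. Hence
\[
    \mathbf{1}_{LM}(z)=\frac{1}{|L\cap M|}\sum_{\ell\in L,\,m\in M}\mathbf{1}[\ell m=z].
\]
Substituting this into \eqref{eq:FT-def} gives
\[
    \widehat F(\rho)=\frac{1}{|\Gamma|\,|L\cap M|}\sum_{\ell\in L,\,m\in M}\rho(m)^{-1}\rho(\ell)^{-1}.
\]
Since $\ell\mapsto\ell^{-1}$ and $m\mapsto m^{-1}$ are bijections of the two subgroups, this equals
\[
    \frac{|L||M|}{|\Gamma|\,|L\cap M|}P_M^\rho P_L^\rho=\frac{|LM|}{|\Gamma|}P_M^\rho P_L^\rho,
\]
using Fact~\ref{fact:subgroup-projection}.

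\textbf{Step 2: take traces.} This gives the claimed formula for $c_\rho(L,M)$. The equality of $\Tr(P_M^\rho P_L^\rho)$ and $\Tr(P_L^\rho P_M^\rho)$ is cyclicity of the trace. Non-negativity follows from Fact~\ref{fact:psd-trace-and-projections}, since both $P_L^\rho$ and $P_M^\rho$ are orthogonal projections and hence positive semidefinite. The final sentence of the statement is then immediate.

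The only mildly delicate point is the fibre count in Step 1: the fibres must all have the same size, $|L\cap M|$, since $LM$ need not be a subgroup. Everything else is direct bookkeeping with the normalisation in \eqref{eq:FT-def}, in particular the inverses $\rho(x)^{-1}=\rho(m)^{-1}\rho(\ell)^{-1}$ for $x=\ell m$.
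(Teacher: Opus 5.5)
Your proposal is correct and follows essentially the same route as the paper: the fibre count $|L\cap M|$ expresses the sum over $LM$ as $|LM|$ times a product of $P_L^\rho$ and $P_M^\rho$, and then \eqref{eq:class-average-fourier}, trace cyclicity and Fact~\ref{fact:psd-trace-and-projections} finish the argument. The only cosmetic difference is that you compute $\widehat F(\rho)$ directly by inverting $\ell$ and $m$. The paper instead computes $\widehat F(\rho)^*=\frac{|LM|}{|\Gamma|}P_L^\rho P_M^\rho$ and then takes adjoints.
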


\begin{proof}
Every element of $LM$ has exactly $|L\cap M|$ representations as $\ell m$ with $\ell\in L$ and $m\in M$. Hence
\[
    |LM|=\frac{|L||M|}{|L\cap M|}.
\]
Therefore
\[
\begin{aligned}
    \sum_{x\in LM}\rho(x)
    &=
    \frac{1}{|L\cap M|}
    \sum_{\ell\in L}\sum_{m\in M}\rho(\ell m)\\
    &=
    \frac{|L||M|}{|L\cap M|}
    P_L^\rho P_M^\rho
    =
    |LM|\,P_L^\rho P_M^\rho.
\end{aligned}
\]
Since $F$ is real-valued,
\[
\begin{aligned}
    \widehat F(\rho)^*
    &=
    \mathbb{E}_{x\in\Gamma}F(x)\rho(x)\\
    &=
    \frac{1}{|\Gamma|}
    \sum_{x\in LM}\rho(x)
    =
    \frac{|LM|}{|\Gamma|}P_L^\rho P_M^\rho.
\end{aligned}
\]
Thus
\[
    \widehat F(\rho)
    =
    \frac{|LM|}{|\Gamma|}P_M^\rho P_L^\rho.
\]
Taking traces gives
\[
    \Tr\bigl(\widehat F(\rho)\bigr)
    =
    \frac{|LM|}{|\Gamma|}
    \Tr\!\bigl(P_M^\rho P_L^\rho\bigr).
\]
Using \eqref{eq:class-average-fourier}, we obtain
\[
    \widehat{F^{\mathrm{cl}}}(\rho)
    =
    \frac{1}{d_\rho}
    \Tr\bigl(\widehat F(\rho)\bigr)I_{d_\rho}
    =
    \frac{|LM|}{|\Gamma|\,d_\rho}
    \Tr\!\bigl(P_M^\rho P_L^\rho\bigr)I_{d_\rho}.
\]
Finally, $P_L^\rho$ and $P_M^\rho$ are orthogonal projections by Fact~\ref{fact:subgroup-projection}; hence they are positive semidefinite. By Fact~\ref{fact:psd-trace-and-projections},
\[
    \Tr(P_M^\rho P_L^\rho)\geq 0.
\]
This proves the claim.
\end{proof}

If, in addition, $\mathbf{1}_{LM}$ itself is a class function, then Fact~\ref{fact:class-function-fourier} and Fact~\ref{fact:product-subgroups-class-average} imply that
\[
    \widehat{\mathbf{1}_{LM}}(\rho)
    =
    c_\rho(L,M)I_{d_\rho}
    \qquad\text{with }c_\rho(L,M)\geq 0
\]
for every $\rho\in\widehat{\Gamma}$.

\section{Conjugacy class averaging}

In this section we prove the reduction through conjugacy averaging. The main point is that, after taking conjugacy class averages of product sets of two subgroups, the Fourier coefficients become non-negative scalar matrices. This allows us to prove the Sidorenko inequality term by term after Fourier expansion.

We first prove a positivity statement for conjugacy-averaged product sets of two subgroups.

\begin{prop}\label{prop:averaged-product-host-sidorenko}
Let $\Gamma$ be a finite group, let $L,M\leq \Gamma$ be subgroups, and set
\[
    a=(\mathbf{1}_{LM})^{\mathrm{cl}}.
\]
Then for every bipartite graph $H=(X\sqcup Y,E_H)$,
\[
    t_{\mathrm{Cay}}(H;\Gamma,a)
    \geq
    t_{\mathrm{Cay}}(K_2;\Gamma,a)^{e(H)}.
\]
Equivalently,
\[
    t_{\mathrm{Cay}}(H;\Gamma,(\mathbf{1}_{LM})^{\mathrm{cl}})
    \geq
    \left(\frac{|LM|}{|\Gamma|}\right)^{e(H)}.
\]
\end{prop}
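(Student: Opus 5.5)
Since $a=(\mathbf{1}_{LM})^{\mathrm{cl}}$ is a real class function, Fourier inversion together with Fact~\ref{fact:product-subgroups-class-average} gives
\[
    a(g)=\sum_{\rho\in\widehat{\Gamma}} d_\rho\,c_\rho\,\chi_\rho(g),
    \qquad c_\rho:=c_\rho(L,M)\geq 0 .
\]
Write $a(\phi(x)^{-1}\psi(y))=\sum_\rho d_\rho c_\rho\Tr\bigl(\rho(\phi(x))^{*}\rho(\psi(y))\bigr)$ for every edge $xy\in E_H$. Expand the product over $E_H$ by choosing one irreducible $\rho_e$ for each edge $e$. Then
\[
    t_{\mathrm{Cay}}(H;\Gamma,a)=\sum_{(\rho_e)_{e\in E_H}}\Bigl(\prod_e d_{\rho_e}c_{\rho_e}\Bigr)\,T\bigl((\rho_e)\bigr),
\]
where $T((\rho_e))$ is the average over $\phi,\psi$ of $\prod_e\Tr\bigl(\rho_e(\phi(x))^*\rho_e(\psi(y))\bigr)$. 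The trivial choice $\rho_e=\mathbf{1}$ for all $e$ contributes $c_{\mathbf{1}}^{e(H)}$. Here $c_{\mathbf 1}=\mathbb{E}a=|LM|/|\Gamma|$, which equals $t_{\mathrm{Cay}}(K_2;\Gamma,a)^{e(H)}$. So it suffices to show $T((\rho_e))\geq 0$ for every tuple, since the weights $d_{\rho_e}c_{\rho_e}$ are non-negative.

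To show $T\geq 0$, I would use the tensor trace identity. The product of traces equals the trace of a tensor product over edges, and by Fact~\ref{fact:tensor-product-multiplication} we get
\[
    \prod_e\Tr\bigl(\rho_e(\phi(x_e))^*\rho_e(\psi(y_e))\bigr)
    =\Tr\Bigl(\bigotimes_e\rho_e(\phi(x_e))^*\cdot\bigotimes_e\rho_e(\psi(y_e))\Bigr).
\]
Regroup the tensor factors by vertex. For $x\in X$, the factors involving $\phi(x)$ are $\bigotimes_{e\ni x}\rho_e(\phi(x))$, placed on the coordinates indexed by edges at $x$. Averaging over $\phi(x)$ independently gives $Q_x:=\mathbb{E}_g\bigotimes_{e\ni x}\rho_e(g)$, padded with identities elsewhere. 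This is the averaging projection of Fact~\ref{fact:subgroup-projection} for the representation $\bigotimes_{e\ni x}\rho_e$ with $L=\Gamma$, so it is an orthogonal projection. Since the $X$-vertices act on disjoint sets of coordinates, the $Q_x$ commute, and their product $P_X=\bigotimes_{x}Q_x$ is an orthogonal projection on $\bigotimes_e V_{\rho_e}$.

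The same argument on the $Y$ side gives a projection $P_Y=\bigotimes_y R_y$. Independence of the $\phi(x)$ and $\psi(y)$ lets the expectation pass through the trace, so $T=\Tr(P_X^*P_Y)=\Tr(P_XP_Y)$. This is non-negative by Fact~\ref{fact:psd-trace-and-projections}.

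The main obstacle is the bookkeeping in the regrouping step. The tensor factors must be reordered consistently so that the whole $X$-side expression is a single operator on $\bigotimes_e V_{\rho_e}$, and so is the whole $Y$-side expression. The adjoint also has to be handled correctly. Since $\bigl(\bigotimes\rho_e(g)\bigr)^*=\bigotimes\rho_e(g^{-1})$ and $g\mapsto g^{-1}$ preserves the uniform measure, $\mathbb{E}_g\bigotimes\rho_e(g)^*$ is the same projection $Q_x$, so no complex conjugation issue arises. Once this is set up, each term is a trace of a product of two projections, and the inequality follows term by term. Fact~\ref{fact:tensor-average-dual} is not needed, though it explains which terms vanish.
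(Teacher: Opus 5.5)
Your proposal is correct and follows the paper's proof essentially step for step. You Fourier-expand $a$ using the non-negative scalar coefficients $c_\rho(L,M)$, isolate the all-trivial term $(|LM|/|\Gamma|)^{e(H)}$, and show that every other term equals $\Tr(P_XQ_Y)$ for products of commuting averaging projections, so it is non-negative; your regrouping of tensor factors by vertex (making $P_X$ a tensor product of projections) and your handling of the adjoint on the $X$-side are just a more explicit form of the paper's commuting-projection argument. The one slip is in wording: it is $c_{\mathbf 1}^{e(H)}$, not $c_{\mathbf 1}$, that equals $t_{\mathrm{Cay}}(K_2;\Gamma,a)^{e(H)}$.
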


\begin{proof}
Let
\[
    E_H=\{e_1,\dots,e_m\},
    \qquad m=e(H),
\]
and write each edge as
\[
    e_r=x_r y_r,
    \qquad x_r\in X,\quad y_r\in Y.
\]
For each $x\in X$ and $y\in Y$, define the incidence matrices
\[
    M_X(x,r)=
    \begin{cases}
    1,&\text{if }x=x_r,\\
    0,&\text{otherwise,}
    \end{cases}
\]
and
\[
    M_Y(y,r)=
    \begin{cases}
    1,&\text{if }y=y_r,\\
    0,&\text{otherwise.}
    \end{cases}
\]
Thus each column $r$ has exactly one $1$ in $M_X$ and exactly one $1$ in $M_Y$.

By Fact~\ref{fact:product-subgroups-class-average}, for every irreducible representation $\rho\in\widehat{\Gamma}$,
\[
    \widehat a(\rho)=c_\rho I_{d_\rho},
    \qquad c_\rho\geq 0.
\]
By the Fourier inversion formula \eqref{eq:Fourier-inversion}, for every $g\in\Gamma$ we have
\[
    a(g)
    =
    \sum_{\rho\in\widehat{\Gamma}}
    d_\rho\,c_\rho\,\Tr(\rho(g)).
\]
Therefore
\begin{align*}
t_{\mathrm{Cay}}(H;\Gamma,a)
&=
\mathbb{E}_{\phi:X\to\Gamma,\ \psi:Y\to\Gamma}
\prod_{r=1}^m
a\bigl(\phi(x_r)^{-1}\psi(y_r)\bigr)\\
&=
\sum_{\rho_1,\dots,\rho_m\in\widehat{\Gamma}}
\left(\prod_{r=1}^m d_{\rho_r}c_{\rho_r}\right)
\mathcal{T}(\rho_1,\dots,\rho_m),
\end{align*}
where
\[
    \mathcal{T}(\rho_1,\dots,\rho_m)
    :=
    \mathbb{E}_{\phi,\psi}
    \prod_{r=1}^m
    \Tr\!\left(
        \rho_r\bigl(\phi(x_r)^{-1}\psi(y_r)\bigr)
    \right).
\]
We shall prove that
\[
    \mathcal{T}(\rho_1,\dots,\rho_m)\geq 0
\]
for every choice of $\rho_1,\dots,\rho_m$. Since each coefficient
\[
    \prod_{r=1}^m d_{\rho_r}c_{\rho_r}
\]
is non-negative, this will imply that every Fourier contribution is non-negative.

Fix $\rho_1,\dots,\rho_m\in\widehat{\Gamma}$, and let
\[
    V:=\bigotimes_{r=1}^m V_{\rho_r}.
\]
For fixed maps $\phi:X\to\Gamma$ and $\psi:Y\to\Gamma$, using the identities
\[
    \rho_r\bigl(\phi(x_r)^{-1}\psi(y_r)\bigr)
    =
    \rho_r(\phi(x_r)^{-1})\rho_r(\psi(y_r))
\]
and Fact~\ref{fact:tensor-product-multiplication}, we get
\begin{align*}
&\prod_{r=1}^m
    \Tr\!\left(
        \rho_r\bigl(\phi(x_r)^{-1}\psi(y_r)\bigr)
    \right)\\
&\qquad =
\Tr\!\left[
    \left(
        \bigotimes_{r=1}^m
        \rho_r(\phi(x_r)^{-1})
    \right)
    \left(
        \bigotimes_{r=1}^m
        \rho_r(\psi(y_r))
    \right)
\right].
\end{align*}

For each $x\in X$, define
\[
    P_x
    :=
    \mathbb{E}_{g\in\Gamma}
    \bigotimes_{r=1}^m
    \rho_r(g^{-M_X(x,r)}).
\]
Here the convention is that $\rho_r(g^0)=I_{d_{\rho_r}}$ and
$\rho_r(g^{-1})=\rho_r(g)^{-1}$.

Similarly, for each $y\in Y$, define
\[
    Q_y
    :=
    \mathbb{E}_{g\in\Gamma}
    \bigotimes_{r=1}^m
    \rho_r(g^{M_Y(y,r)}).
\]

Each $P_x$ is an orthogonal projection. Indeed, after the change of variables $g\mapsto g^{-1}$, $P_x$ is the average over $\Gamma$ of the unitary representation
\[
    g\mapsto
    \bigotimes_{r=1}^m
    \rho_r(g^{M_X(x,r)}).
\]
Hence Fact~\ref{fact:subgroup-projection}, applied with the subgroup $\Gamma\leq\Gamma$, shows that $P_x$ is an orthogonal projection. Similarly, each $Q_y$ is an orthogonal projection.

Moreover, the projections $\{P_x:x\in X\}$ commute with one another. Indeed, if $x,x'\in X$ are distinct, then for every edge index $r$,
\[
    M_X(x,r)M_X(x',r)=0,
\]
because an edge has only one endpoint in $X$. Hence, for every $g,h\in\Gamma$,
\[
\left(
    \bigotimes_{r=1}^m \rho_r(g^{-M_X(x,r)})
\right)
\left(
    \bigotimes_{r=1}^m \rho_r(h^{-M_X(x',r)})
\right)
=
\left(
    \bigotimes_{r=1}^m \rho_r(h^{-M_X(x',r)})
\right)
\left(
    \bigotimes_{r=1}^m \rho_r(g^{-M_X(x,r)})
\right),
\]
since in each tensor factor at least one of the two matrices is the identity. Averaging over $g$ and $h$ gives
\[
    P_xP_{x'}=P_{x'}P_x.
\]
Therefore
\[
    P_X:=\prod_{x\in X}P_x
\]
is an orthogonal projection by Fact~\ref{fact:psd-trace-and-projections}.

The same argument shows that the projections $\{Q_y:y\in Y\}$ commute with one another, and hence
\[
    Q_Y:=\prod_{y\in Y}Q_y
\]
is an orthogonal projection.

Now we compute $\mathcal{T}(\rho_1,\dots,\rho_m)$. By linearity of trace and independence of the variables $\phi(x)$ and $\psi(y)$,
\begin{align*}
\mathcal{T}(\rho_1,\dots,\rho_m)
&=
\Tr\!\left[
    \mathbb{E}_{\phi}
    \bigotimes_{r=1}^m
    \rho_r(\phi(x_r)^{-1})
    \cdot
    \mathbb{E}_{\psi}
    \bigotimes_{r=1}^m
    \rho_r(\psi(y_r))
\right]\\
&=
\Tr(P_XQ_Y).
\end{align*}
Since $P_X$ and $Q_Y$ are orthogonal projections, they are positive semidefinite. Hence, by Fact~\ref{fact:psd-trace-and-projections},
\[
    \Tr(P_XQ_Y)\geq 0.
\]
Thus
\[
    \mathcal{T}(\rho_1,\dots,\rho_m)\geq 0
\]
for every $\rho_1,\dots,\rho_m\in\widehat{\Gamma}$.

It remains to identify the contribution coming from the trivial representation. Let
\(\rho_{\mathrm{triv}}\) denote the trivial representation of \(\Gamma\). Then
\(d_{\rho_{\mathrm{triv}}}=1\), and \(\rho_{\mathrm{triv}}(g)=1\) for every
\(g\in\Gamma\). Hence
\[
    c_{\rho_{\mathrm{triv}}}
    =
    \widehat a(\rho_{\mathrm{triv}})
    =
    \mathbb{E}_{g\in\Gamma}a(g).
\]
Since \(a=(\mathbf{1}_{LM})^{\mathrm{cl}}\), conjugacy averaging preserves the
uniform average, and therefore
\[
    \mathbb{E}_{g\in\Gamma}a(g)
    =
    \mathbb{E}_{g\in\Gamma}(\mathbf{1}_{LM})^{\mathrm{cl}}(g)
    =
    \mathbb{E}_{g\in\Gamma}\mathbf{1}_{LM}(g)
    =
    \frac{|LM|}{|\Gamma|}.
\]
Thus
\[
    c_{\rho_{\mathrm{triv}}}
    =
    \frac{|LM|}{|\Gamma|}.
\]

Now consider the summand in the Fourier expansion for which
\[
    \rho_1=\rho_2=\cdots=\rho_m=\rho_{\mathrm{triv}}.
\]
For this choice,
\[
    \prod_{r=1}^m d_{\rho_r}c_{\rho_r}
    =
    \left(\frac{|LM|}{|\Gamma|}\right)^m.
\]
Moreover,
\[
\begin{aligned}
    \mathcal{T}(\rho_{\mathrm{triv}},\dots,\rho_{\mathrm{triv}})
    &=
    \mathbb{E}_{\phi:X\to\Gamma,\ \psi:Y\to\Gamma}
    \prod_{r=1}^m
    \Tr\!\left(
        \rho_{\mathrm{triv}}\bigl(\phi(x_r)^{-1}\psi(y_r)\bigr)
    \right)  \\
    &=
    \mathbb{E}_{\phi:X\to\Gamma,\ \psi:Y\to\Gamma}
    \prod_{r=1}^m 1
    =
    1.
\end{aligned}
\]
Therefore the trivial-representation contribution to
\(t_{\mathrm{Cay}}(H;\Gamma,a)\) is exactly
\[
    \left(\frac{|LM|}{|\Gamma|}\right)^m.
\]

Since all other Fourier contributions have been shown to be non-negative, we obtain
\[
    t_{\mathrm{Cay}}(H;\Gamma,a)
    \geq
    \left(\frac{|LM|}{|\Gamma|}\right)^m.
\]
Finally,
\[
    t_{\mathrm{Cay}}(K_2;\Gamma,a)
    =
    \mathbb{E}_{g\in\Gamma}a(g)
    =
    \frac{|LM|}{|\Gamma|}.
\]
As \(m=e(H)\), this gives
\[
    t_{\mathrm{Cay}}(H;\Gamma,a)
    \geq
    t_{\mathrm{Cay}}(K_2;\Gamma,a)^{e(H)}.
\]
This proves the proposition.
\end{proof}

We can now prove theorem~\ref{thm:conj-average-reduction}.

\begin{proof}[Proof of Theorem~\ref{thm:conj-average-reduction}]
Let $H=(X\sqcup Y,E_H)$ be a bipartite graph, and suppose that for every $n\geq 1$ and every pair of subgroups $T_1,T_2\leq S_n$,
\[
    t_{\mathrm{Cay}}\bigl(H;S_n,\mathbf{1}_{T_1T_2}\bigr)
    \geq
    t_{\mathrm{Cay}}\bigl(H;S_n,(\mathbf{1}_{T_1T_2})^{\mathrm{cl}}\bigr).
\]
We shall prove that $H$ is strong Sidorenko.

Fix $n$ and subgroups $T_1,T_2\leq S_n$. Applying Proposition~\ref{prop:averaged-product-host-sidorenko} with
\[
    \Gamma=S_n,\qquad L=T_1,\qquad M=T_2,
\]
we get
\[
    t_{\mathrm{Cay}}\bigl(H;S_n,(\mathbf{1}_{T_1T_2})^{\mathrm{cl}}\bigr)
    \geq
    \left(\frac{|T_1T_2|}{|S_n|}\right)^{e(H)}.
\]
By the assumed comparison inequality,
\[
    t_{\mathrm{Cay}}\bigl(H;S_n,\mathbf{1}_{T_1T_2}\bigr)
    \geq
    t_{\mathrm{Cay}}\bigl(H;S_n,(\mathbf{1}_{T_1T_2})^{\mathrm{cl}}\bigr).
\]
Therefore
\[
    t_{\mathrm{Cay}}\bigl(H;S_n,\mathbf{1}_{T_1T_2}\bigr)
    \geq
    \left(\frac{|T_1T_2|}{|S_n|}\right)^{e(H)}.
\]
But
\[
    t_{\mathrm{Cay}}\bigl(K_2;S_n,\mathbf{1}_{T_1T_2}\bigr)
    =
    \frac{|T_1T_2|}{|S_n|}.
\]
Hence
\[
    t_{\mathrm{Cay}}\bigl(H;S_n,\mathbf{1}_{T_1T_2}\bigr)
    \geq
    t_{\mathrm{Cay}}\bigl(K_2;S_n,\mathbf{1}_{T_1T_2}\bigr)^{e(H)}
\]
for every $n$ and every pair of subgroups $T_1,T_2\leq S_n$.

By Proposition~\ref{prop:szegedy}, this implies that $H$ is strong Sidorenko. In particular, $H$ is Sidorenko.
\end{proof}
\section{1-subdivisions on conjugacy-averaged Cayley kernels}

In this section we prove Theorem~\ref{thm:subdivision-average}. The proof is again based on Fourier expansion. The additional feature for 1-subdivisions is that each subdivision vertex has degree two, and the average over this vertex forces the two corresponding irreducible representations to be contragredient. Since the kernel is a real-valued class function, the two Fourier scalars are complex conjugates of each other.

\begin{proof}[Proof of Theorem~\ref{thm:subdivision-average}]
Let
\[
    H_0=(V,E)
\]
be a finite graph, and write
\[
    m:=|E|.
\]
Let
\[
    H=\operatorname{Sub}(H_0)
\]
be its 1-subdivision. We regard \(H\) as a bipartite graph with bipartition
\[
    X=V,
    \qquad
    Y=E,
\]
where each edge \(e\in E\) is viewed as the subdivision vertex corresponding to \(e\). If \(e=uv\in E\), then in \(H\) the subdivision vertex \(e\in Y\) is adjacent to the two vertices \(u,v\in X\).

Set
\[
    b:=a^{\mathrm{cl}}.
\]
Then \(b\) is a real-valued class function on \(\Gamma\). By Fact~\ref{fact:class-function-fourier}, for every
\(\rho\in\widehat{\Gamma}\) we may write
\[
    \widehat b(\rho)=\lambda_\rho I_{d_\rho}.
\]
Moreover, since \(b\) is real-valued, Fact~\ref{fact:dual-class-coefficients} gives
\[
    \lambda_{\rho^\vee}=\overline{\lambda_\rho}.
\]

By Fourier inversion \eqref{eq:Fourier-inversion}, for every \(g\in\Gamma\),
\[
    b(g)
    =
    \sum_{\rho\in\widehat{\Gamma}}
    d_\rho \lambda_\rho \Tr(\rho(g)).
\]

Let
\[
    I(H_0):=\{(v,e): v\in V,\ e\in E,\ v\in e\}
\]
be the incidence set of \(H_0\). Since \(H\) is the 1-subdivision of \(H_0\), its edge set is naturally identified with \(I(H_0)\). Thus
\[
    |I(H_0)|=2m=e(H).
\]
For maps
\[
    \phi:V\to\Gamma,
    \qquad
    \psi:E\to\Gamma,
\]
we have
\[
    t_{\mathrm{Cay}}(H;\Gamma,b)
    =
    \mathbb{E}_{\phi,\psi}
    \prod_{(v,e)\in I(H_0)}
    b\bigl(\phi(v)^{-1}\psi(e)\bigr).
\]
Expanding each factor by Fourier inversion gives
\[
    t_{\mathrm{Cay}}(H;\Gamma,b)
    =
    \sum_{\boldsymbol{\rho}\in\widehat{\Gamma}^{I(H_0)}}
    C(\boldsymbol{\rho})\,
    \mathcal{T}(\boldsymbol{\rho}),
\]
where
\[
    C(\boldsymbol{\rho})
    :=
    \prod_{(v,e)\in I(H_0)}
    d_{\rho_{v,e}}\lambda_{\rho_{v,e}},
\]
and
\[
    \mathcal{T}(\boldsymbol{\rho})
    :=
    \mathbb{E}_{\phi,\psi}
    \prod_{(v,e)\in I(H_0)}
    \Tr\!\left(
        \rho_{v,e}\bigl(\phi(v)^{-1}\psi(e)\bigr)
    \right).
\]

We now analyze \(\mathcal{T}(\boldsymbol{\rho})\). For a fixed choice
\[
    \boldsymbol{\rho}=(\rho_{v,e})_{(v,e)\in I(H_0)}
\]
of irreducible representations, set
\[
    W_{\boldsymbol{\rho}}
    :=
    \bigotimes_{(v,e)\in I(H_0)} V_{\rho_{v,e}}.
\]
Using Fact~\ref{fact:tensor-product-multiplication} and the identity
\[
    \rho_{v,e}\bigl(\phi(v)^{-1}\psi(e)\bigr)
    =
    \rho_{v,e}(\phi(v)^{-1})\rho_{v,e}(\psi(e)),
\]
we can rewrite the product of traces as a single trace:
\[
\begin{aligned}
&\prod_{(v,e)\in I(H_0)}
    \Tr\!\left(
        \rho_{v,e}\bigl(\phi(v)^{-1}\psi(e)\bigr)
    \right)\\
&\qquad =
\Tr\!\left[
    \left(
        \bigotimes_{(v,e)\in I(H_0)}
        \rho_{v,e}(\phi(v)^{-1})
    \right)
    \left(
        \bigotimes_{(v,e)\in I(H_0)}
        \rho_{v,e}(\psi(e))
    \right)
\right].
\end{aligned}
\]

For each \(v\in V\), define an operator \(P_v\) on \(W_{\boldsymbol{\rho}}\) by
\[
    P_v
    :=
    \mathbb{E}_{g\in\Gamma}
    \bigotimes_{(w,e)\in I(H_0)}
    R_{w,e}^{v}(g),
\]
where
\[
    R_{w,e}^{v}(g)
    :=
    \begin{cases}
    \rho_{w,e}(g^{-1}), & \text{if } w=v,\\
    I_{d_{\rho_{w,e}}}, & \text{if } w\neq v.
    \end{cases}
\]
Similarly, for each \(e\in E\), define an operator \(Q_e\) on \(W_{\boldsymbol{\rho}}\) by
\[
    Q_e
    :=
    \mathbb{E}_{g\in\Gamma}
    \bigotimes_{(v,f)\in I(H_0)}
    S_{v,f}^{e}(g),
\]
where
\[
    S_{v,f}^{e}(g)
    :=
    \begin{cases}
    \rho_{v,f}(g), & \text{if } f=e,\\
    I_{d_{\rho_{v,f}}}, & \text{if } f\neq e.
    \end{cases}
\]

Each \(P_v\) is the average over a unitary representation of \(\Gamma\), hence it is an orthogonal projection by Fact~\ref{fact:subgroup-projection}, applied to the subgroup \(\Gamma\leq\Gamma\). Similarly, each \(Q_e\) is an orthogonal projection.

Moreover, the projections \(P_v\) commute with one another. Indeed, if \(v\neq v'\), then for each incidence \((w,e)\) at most one of \(w=v\) and \(w=v'\) can hold. Hence in every tensor factor at least one of the two corresponding matrices is the identity. It follows that
\[
    P_vP_{v'}=P_{v'}P_v.
\]
Therefore
\[
    P_V:=\prod_{v\in V}P_v
\]
is an orthogonal projection by Fact~\ref{fact:psd-trace-and-projections}. Similarly, the projections \(Q_e\) commute with one another, since distinct edges \(e\neq e'\) correspond to disjoint sets of incidence tensor factors. Hence
\[
    Q_E:=\prod_{e\in E}Q_e
\]
is also an orthogonal projection.

By independence of the variables \(\phi(v)\) and \(\psi(e)\), and by linearity of trace, we obtain
\[
    \mathcal{T}(\boldsymbol{\rho})
    =
    \Tr(P_VQ_E).
\]
Since \(P_V\) and \(Q_E\) are orthogonal projections, they are positive semidefinite. Therefore, by Fact~\ref{fact:psd-trace-and-projections},
\[
    \mathcal{T}(\boldsymbol{\rho})=\Tr(P_VQ_E)\geq 0.
\]

It remains to identify when the coefficient \(C(\boldsymbol{\rho})\) contributes. For an edge \(e=uv\in E\), the operator \(Q_e\) acts nontrivially only on the two tensor factors corresponding to \((u,e)\) and \((v,e)\). On these two factors it is
\[
    \mathbb{E}_{g\in\Gamma}
    \rho_{u,e}(g)\otimes \rho_{v,e}(g).
\]
By Fact~\ref{fact:tensor-average-dual}, this operator is zero unless
\[
    \rho_{v,e}\simeq \rho_{u,e}^{\vee}.
\]
Consequently, if for some edge \(e=uv\) the two representations \(\rho_{u,e}\) and \(\rho_{v,e}\) are not contragredient, then
\[
    Q_e=0,
\]
and hence
\[
    \mathcal{T}(\boldsymbol{\rho})=0.
\]

Thus only those choices of \(\boldsymbol{\rho}\) for which the two representations on every subdivided pair are contragredient can contribute. For such a choice, for each edge \(e=uv\in E\) we have
\[
    d_{\rho_{u,e}}=d_{\rho_{v,e}}
\]
and, by Fact~\ref{fact:dual-class-coefficients},
\[
    \lambda_{\rho_{u,e}}\lambda_{\rho_{v,e}}
    =
    \lambda_{\rho_{u,e}}\lambda_{\rho_{u,e}^{\vee}}
    =
    \lambda_{\rho_{u,e}}\overline{\lambda_{\rho_{u,e}}}
    =
    |\lambda_{\rho_{u,e}}|^2.
\]
Hence
\[
    C(\boldsymbol{\rho})
    =
    \prod_{e=uv\in E}
    d_{\rho_{u,e}}^2
    |\lambda_{\rho_{u,e}}|^2
    \geq 0.
\]
Combining this with
\[
    \mathcal{T}(\boldsymbol{\rho})\geq 0,
\]
we see that every nonzero Fourier contribution to
\[
    t_{\mathrm{Cay}}(H;\Gamma,b)
\]
is non-negative.

Therefore \(t_{\mathrm{Cay}}(H;\Gamma,b)\) is at least the contribution coming from the trivial representation on every incidence. Let \(\rho_{\mathrm{triv}}\) denote the trivial representation of \(\Gamma\). For the choice
\[
    \rho_{v,e}=\rho_{\mathrm{triv}}
    \qquad
    \text{for every }(v,e)\in I(H_0),
\]
we have
\[
    d_{\rho_{\mathrm{triv}}}=1,
    \qquad
    \lambda_{\rho_{\mathrm{triv}}}
    =
    \mathbb{E}_{g\in\Gamma} b(g).
\]
Since \(b=a^{\mathrm{cl}}\), conjugacy averaging preserves the uniform average:
\[
    \mathbb{E}_{g\in\Gamma} b(g)
    =
    \mathbb{E}_{g\in\Gamma} a(g).
\]
Moreover,
\[
    \mathcal{T}(\rho_{\mathrm{triv}},\dots,\rho_{\mathrm{triv}})
    =
    1.
\]
Thus the trivial-representation contribution is exactly
\[
    \left(\mathbb{E}_{g\in\Gamma}a(g)\right)^{|I(H_0)|}
    =
    \left(\mathbb{E}_{g\in\Gamma}a(g)\right)^{2e(H_0)}.
\]
Since all other contributions are non-negative, we conclude that
\[
    t_{\mathrm{Cay}}(H;\Gamma,a^{\mathrm{cl}})
    =
    t_{\mathrm{Cay}}(H;\Gamma,b)
    \geq
    \left(\mathbb{E}_{g\in\Gamma}a(g)\right)^{2e(H_0)}.
\]
Finally,
\[
    t_{\mathrm{Cay}}(K_2;\Gamma,a^{\mathrm{cl}})
    =
    \mathbb{E}_{g\in\Gamma}a^{\mathrm{cl}}(g)
    =
    \mathbb{E}_{g\in\Gamma}a(g).
\]
Since
\[
    e(H)=2e(H_0),
\]
this is precisely
\[
    t_{\mathrm{Cay}}\bigl(H;\Gamma,a^{\mathrm{cl}}\bigr)
    \geq
    t_{\mathrm{Cay}}\bigl(K_2;\Gamma,a^{\mathrm{cl}}\bigr)^{e(H)}.
\]
The theorem follows.
\end{proof}

\section*{Declaration of Generative AI and AI-assisted technologies in the writing process}

During the preparation of this manuscript, the author used ChatGPT for language polishing, editing assistance, and improving readability. The author has reviewed the AI-assisted text and takes full responsibility for the content of this manuscript. All mathematical ideas, statements, proofs, and final verification are the author's own.

\bibliographystyle{abbrv}
\bibliography{ref}

\end{document}